 \newcommand\AGL{\mathrm{AGL}}   
 \newcommand\bbF{\mathbb{F}} 
      \newcommand\Cr{\mathrm{cr}}
 \newcommand\fix{\mathrm{fix}}
    \newcommand\GU{\mathrm{GU}}
 \newcommand\PG{\mathrm{PG}} \newcommand\PGL{\mathrm{PGL}} 
 \newcommand\PGaU{\mathrm{P\Gamma U}} \newcommand\PGU{\mathrm{PGU}}   \newcommand\PSL{\mathrm{PSL}}     \newcommand\PSU{\mathrm{PSU}}
  \newcommand\Ree{\mathrm{Ree}} 
       \newcommand\Sy{\mathrm{S}} \newcommand\Sym{\mathrm{Sym}} \newcommand\Sz{\mathrm{Sz}}
\newcommand\herm{\mathsf{H}}
\newcommand\F{\mathbb{F}}
\newcommand\graph{\mathsf{Graph}}
\newtheorem{theorem}{Theorem}[section]
\newtheorem{lemma}[theorem]{Lemma}
\newtheorem{proposition}[theorem]{Proposition}
\newtheorem{problem}[theorem]{Problem}
\theoremstyle{definition}
\begin{document}

\title{The covering radii of the $2$-transitive unitary, Suzuki, and Ree groups}
\author{John Bamberg, Cheryl E. Praeger, and Binzhou Xia}

\address[Bamberg]{
Centre for Mathematics of Symmetry and Computation\\
School of Mathematics and Statistics\\
University of Western Australia\\
35 Stirling Highway\\
Crawley 6009, Australia.\newline Email: {\tt john.bamberg@uwa.edu.au}
}

\address[Praeger]{
Centre for Mathematics of Symmetry and Computation\\
School of Mathematics and Statistics\\
University of Western Australia\\
35 Stirling Highway\\
Crawley 6009, Australia.\newline Email: {\tt cheryl.praeger@uwa.edu.au}
}

\address[Xia]{
School of Mathematics and Statistics\\
The University of Melbourne\\
Parkville, VIC 3010, Australia.\newline Email: {\tt binzhoux@unimelb.edu.au}
}

\maketitle

\begin{abstract}
We study the covering radii of $2$-transitive permutation groups of Lie rank one, giving bounds and links to finite geometry.

\textit{Key words:} covering radius; $2$-transitive permutation group; unitary group; Suzuki group; Ree group

\textit{MSC2010:} 05C70, 05A05
\end{abstract}

\section{Introduction}

Let $n\geqslant2$. Define the \emph{Hamming distance} $d_n$ on the symmetric group $\Sy_n$ by letting
\[
d_n(g,h)=n-|\fix(gh^{-1})|
\]
for any $g,h\in\Sy_n$, where $\fix(x)$ denotes the set of fixed points for $x\in\Sy_n$. Equipped with Hamming distance $\Sy_n$ is then a metric space. As usual, the distance of a point $v$ from a subset $C$ in $\Sy_n$ is
\[
d_n(v,C):=\min\{d_n(v,c)\mid c\in C\},
\]
and the \emph{covering radius} of $C$ is
\[
\Cr_n(C):=\max\{d_n(v,C)\mid v\in\Sy_n\}.
\]
Covering radii of subgroups of $\Sy_n$ were first studied in a seminal paper of Cameron and Wanless~\cite{CW2005}, where an upper bound was given in terms of the transitivity:

\begin{proposition}
\emph{(\cite[Proposition~16]{CW2005})} If $G$ is a $t$-transitive permutation group of degree $n$, then $\Cr_n(G)\leqslant n-t$.
\end{proposition}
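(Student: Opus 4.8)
The plan is to recast the covering-radius bound as an interpolation statement: it suffices to show that for every $v\in\Sy_n$ there is some $g\in G$ with $d_n(v,g)\leqslant n-t$, since taking the maximum over $v$ then yields $\Cr_n(G)\leqslant n-t$. First I would unravel the distance. By definition $d_n(v,g)=n-|\fix(vg^{-1})|$, and a point $x$ lies in $\fix(vg^{-1})$ exactly when $vg^{-1}(x)=x$; writing $y=g^{-1}(x)$ this says $v(y)=g(y)$, and as $x$ runs over all points so does $y$. Hence $|\fix(vg^{-1})|$ is precisely the number of points on which the permutations $g$ and $v$ agree, and the desired inequality $d_n(v,g)\leqslant n-t$ is equivalent to the assertion that $g$ and $v$ agree in at least $t$ coordinates.

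Next I would produce such a $g$ directly from $t$-transitivity. Fix any $t$ distinct points $a_1,\dots,a_t$ of the underlying set (say $1,\dots,t$) and set $b_i:=v(a_i)$ for $i=1,\dots,t$. Since $v$ is a bijection, $b_1,\dots,b_t$ are again $t$ pairwise distinct points, so by the $t$-transitivity of $G$ there exists $g\in G$ with $g(a_i)=b_i=v(a_i)$ for all $i$. Then $g$ and $v$ agree on the set $\{a_1,\dots,a_t\}$, so $|\fix(vg^{-1})|\geqslant t$ and therefore $d_n(v,g)\leqslant n-t$. Since $v\in\Sy_n$ was arbitrary, this gives $\Cr_n(G)\leqslant n-t$.

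There is essentially no obstacle in this argument: the only point requiring a remark is that the targets $b_1,\dots,b_t$ are distinct, which is automatic because $v$ is a permutation, and $t$-transitivity is exactly the hypothesis needed to realise the required interpolation within $G$. The genuinely interesting question, which motivates the rest of the paper, is how far this bound is from being tight for the rank one $2$-transitive groups; but for the proposition as stated the short argument above is complete.
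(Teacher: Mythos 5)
Your argument is correct and is the standard proof of this result; the paper itself does not reprove it but simply cites \cite[Proposition~16]{CW2005}, whose argument is exactly the interpolation you give (use $t$-transitivity to find $g\in G$ agreeing with $v$ on $t$ chosen points, so $|\fix(vg^{-1})|\geqslant t$). Nothing is missing.
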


In light of this result, the authors of~\cite{CW2005} were then interested in the covering radius of $2$-transitive permutation groups. For example, they showed that $\Cr_{q+1}(\PSL(2,q))=q-\gcd(2,q)$ for all $q$, and determined $\Cr_{q+1}(\AGL(2,q))$ and $\Cr_{q+1}(\PGL(2,q))$ for $q\not\equiv1\pmod{6}$. For $q\equiv1\pmod{6}$, a determination of $\Cr_{q+1}(\AGL(2,q))$ and $\Cr_{q+1}(\PGL(2,q))$ was accomplished in~\cite{WZ2013} and~\cite{Xia2017}, respectively. In determining the covering radii of these groups, the key was to find a large lower bound on the covering radius, which turns out to be ad hoc for each group. To shed light on the covering radii of $2$-transitive permutation groups, especially on establishing their lower bounds, we apply a general idea of ``field automorphism" construction to obtain lower bounds of the other $2$-transitive permutation groups that are simple groups of (twisted) Lie rank one: $\PSU(3,q)$, $\Sz(q)$ and $\Ree(q)$. This leads to the following main results of this paper, whose proof is at the end of Sections~\ref{sec1}, \ref{sec2} and~\ref{sec3}, respectively.

\begin{theorem}\label{BoundPSU}
Let $q=p^f$ with prime $p$. Then
\[
q^3-p\leqslant\Cr_{q^3+1}(\PGU(3,q))\leqslant\Cr_{q^3+1}(\PSU(3,q))\leqslant q^3-\gcd(2,q).
\]
In particular, if $q$ is even then
\[
\Cr_{q^3+1}(\PGU(3,q))=\Cr_{q^3+1}(\PSU(3,q))=q^3-2.
\]
Moreover, for the field automorphism $h$ defined in~\eqref{eq6}, $d_{q^3+1}(h,\PGU(3,q))=q^3-p$.
\end{theorem}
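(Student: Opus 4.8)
The plan is to derive the whole of Theorem~\ref{BoundPSU} from its final assertion $d_{q^3+1}(h,\PGU(3,q))=q^3-p$. Indeed $\Cr_{q^3+1}(\PGU(3,q))\geqslant d_{q^3+1}(h,\PGU(3,q))$ directly from the definition of the covering radius, and $\Cr_{q^3+1}(\PGU(3,q))\leqslant\Cr_{q^3+1}(\PSU(3,q))$ since $\PSU(3,q)\leqslant\PGU(3,q)$; the remaining upper bound $\Cr_{q^3+1}(\PSU(3,q))\leqslant q^3-\gcd(2,q)$ is just $2$-transitivity via~\cite[Proposition~16]{CW2005} when $q$ is odd, while for $q$ even the sharper bound $q^3-2$ requires a separate, more combinatorial argument (that $\PSU(3,q)$ meets every permutation of the $q^3+1$ points in at least three points) which I will not dwell on, as it is not the novel point. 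So everything comes down to the displayed equation for $h$.

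I would set the problem on the Hermitian unital: let $\PGaU(3,q)=\PGU(3,q)\rtimes\Gal(\bbF_{q^2}/\bbF_p)$ act $2$-transitively on the $q^3+1$ points of the Hermitian curve $\herm\subseteq\PG(2,q^2)$, with the defining Hermitian form chosen to have coefficients in $\bbF_p$, and let $h$ be the collineation induced by the Frobenius $t\mapsto t^p$ of $\bbF_{q^2}$, so that $h$ preserves $\herm$ and its companion field automorphism has order $2f$ and fixed field $\bbF_p$. The inequality $d_{q^3+1}(h,\PGU(3,q))\leqslant q^3-p$ is immediate on comparing $h$ with the identity: the fixed points of $h$ on $\herm$ are precisely the $\bbF_p$-rational points of the curve, and on those points $t^q=t$, so the defining equation collapses to a ternary quadratic form over $\bbF_p$ — a non-degenerate conic when $p$ is odd, a line when $p=2$ — whose point set has size $p+1$ in both cases. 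Hence $|\fix(h)|=p+1$ and $d_{q^3+1}(h,\mathrm{id})=q^3-p$.

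The substance is the reverse inequality: $|\fix(x)|\leqslant p+1$ for every $x$ in the coset $h\,\PGU(3,q)$, equivalently for every $x\in\PGaU(3,q)$ with companion field automorphism $t\mapsto t^p$. Since the number of fixed points is constant on $\PGU(3,q)$-conjugacy classes and conjugation preserves the coset, I would reduce to a list of representatives; each such $x$ is an $\bbF_{q^2}$-semilinear similitude whose linear part $x^{2f}$ lies in $\PGU(3,q)$, and the fixed structure of $x$ on $\PG(2,q^2)$ is confined to the eigenspaces of $x^{2f}$. If $x^{2f}$ is not scalar, $\fix(x)$ lies in a union of at most three proper subspaces, and on any line it meets in three or more points $x$ is conjugate to the pure Frobenius, hence fixes exactly a subline $\PG(1,p)$; a short argument then bounds $|\fix(x)|$ by $p+1$. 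If $x^{2f}$ is scalar and $\fix(x)$ spans the plane, an eigenvector argument together with Hilbert~90 for the cyclic extension $\bbF_{q^2}/\bbF_p$ shows $x$ is a scalar times the Frobenius collineation, so $\fix(x)$ is a prime subplane $\PG(2,p)$ and $\fix(x)\cap\herm$ is once more the $\bbF_p$-point set of a ternary quadratic form coming from the Hermitian form.

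The main obstacle is running this case analysis to its conclusion. In the subplane case I must show the relevant $\bbF_p$-form is a non-degenerate conic (size $p+1$), ruling out a pair of distinct $\bbF_p$-lines ($2p+1$ points) or the zero form ($p^2+p+1$ points); in the intermediate cases I must rule out a subline-plus-off-line-point configuration of size $p+2$. In each instance the offending configuration is killed by non-degeneracy: invariance of $\herm$ under $x$ forces the Hermitian matrix $H$ to satisfy $H^{(p)}=\mu H$ for some scalar $\mu$, and combining this with the Hermitian symmetry and $\det H\neq 0$ collapses the possibilities to exactly $p+1$ fixed points. I expect the bookkeeping — enumerating the $\PGU(3,q)$-classes in the coset (via rational normal forms for semilinear maps, or direct computation for small $q$) and treating $p=2$ separately, where ``conic'' degenerates to a line — to be the fiddliest part, but no single step to be conceptually hard once the non-degeneracy input is in hand. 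Combining the two inequalities yields $d_{q^3+1}(h,\PGU(3,q))=q^3-p$, and with the reductions of the first paragraph this proves Theorem~\ref{BoundPSU}.
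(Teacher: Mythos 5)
Your global architecture matches the paper's: everything is reduced to $d_{q^3+1}(h,\PGU(3,q))=q^3-p$, the upper bound on the covering radius comes from $2$-transitivity (plus the extra saving for even $q$, which the paper gets by citing \cite[Theorem~21]{CW2005} rather than reproving it --- you should cite something here instead of waving at "a separate combinatorial argument"), and the inequality $d_{q^3+1}(h,\PGU(3,q))\leqslant q^3-p$ is obtained by exhibiting an element of the coset with $p+1$ fixed points. Your choice of the identity for that witness is correct and in fact cleaner than the paper's: the fixed points of $h$ on $\herm(2,q^2)$ are exactly the $\bbF_p$-rational points, where the Hermitian equation collapses to $x_2^2+2x_1x_3=0$, giving $p+1$ points for every $p$; the paper instead uses a nontrivial diagonal $g$ when $p>2$.

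The genuine gap is in the reverse inequality, $|\fix(gh^{-1})|\leqslant p+1$ for every $g\in\PGU(3,q)$, which is the entire mathematical content of the theorem. Your plan --- classify the $\PGU(3,q)$-classes in the coset via eigenspaces of the linear part $x^{2f}$, Baer sublines, Hilbert~90, and then "a short argument" using non-degeneracy of the Hermitian matrix --- defers precisely the step where the work lies. A priori your eigenspace decomposition only confines $\fix(x)$ to a union of up to three subspaces, each of which could contribute a subline of $p+1$ fixed points, so the naive bound is about $3(p+1)$; nothing you write actually shows how "non-degeneracy collapses the possibilities to exactly $p+1$", and the phrase "direct computation for small $q$" is not a proof for general $q$. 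The paper's route makes this concrete and avoids any classification of semilinear normal forms: if $gh^{-1}$ fixes two points, conjugate them to $\langle e_1\rangle$ and $\langle e_3\rangle$ by $2$-transitivity, compute the order of the two-point stabiliser in $\PGaU(3,q)$ to identify it as an explicit rank-one torus $Y^\varphi$ extended by $\langle h\rangle$, and then parametrise the remaining fixed points $\langle(a,b,1)\rangle$ by the explicit system $a^{p-1}=\gamma^{q+1}$, $b^p=b\gamma^q\delta$, $a+a^q+b^{q+1}=0$. Bounding the number of solutions of that system by $p-1$ (Lemma~\ref{SystemPSU}) requires a genuine three-way case split on the value of $\gamma^{(q^2-1)/(p-1)}$, and it is exactly this counting that your sketch does not supply. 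Until you carry out an equivalent analysis, the lower bound $\Cr_{q^3+1}(\PGU(3,q))\geqslant q^3-p$, and hence the theorem, is not established.
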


\begin{theorem}\label{BoundSz}
Let $q=2^{2m+1}$. Then $q^2-4\leqslant\Cr_{q^2+1}(\Sz(q))\leqslant q^2-2$. Moreover, for the field automorphism $h$ defined in~\eqref{eq7}, $d_{q^2+1}(h,\Sz(q))=q^2-4$.
\end{theorem}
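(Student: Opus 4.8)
The inequality $q^2-4\leqslant\Cr_{q^2+1}(\Sz(q))$ is just $\Cr_n(C)\geqslant d_n(h,C)$ for $C=\Sz(q)$, so the theorem reduces to two statements: the exact value $d_{q^2+1}(h,\Sz(q))=q^2-4$, and the upper bound $\Cr_{q^2+1}(\Sz(q))\leqslant q^2-2$. Throughout put $G=\Sz(q)$ and $G^+=G\rtimes\langle h\rangle$, and let $\Omega$ be the Suzuki--Tits ovoid on which $G^+$ acts $2$-transitively, with $h$ acting as the map induced by the Frobenius $t\mapsto t^2$ of $\bbF_q$ over $\bbF_2$.

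\emph{The field automorphism.} Since $d_{q^2+1}(h,g)=q^2+1-|\fix(hg^{-1})|$ and $hg^{-1}$ runs over the coset $Gh$ as $g$ runs over $G$, the claim $d_{q^2+1}(h,\Sz(q))=q^2-4$ is equivalent to $\max_{x\in Gh}|\fix(x)|=5$. Taking $g=1$ gives $|\fix(h)|=5$, because the points of $\Omega$ fixed by $h$ are exactly the $5$ ovoid points defined over $\bbF_2$ (the $\Sz(2)$-subovoid). It remains to show $|\fix(x)|\leqslant5$ for every $x\in Gh$; here is the plan. As $\fix(x)\subseteq\fix(x^{2m+1})$ and $x^{2m+1}\in G$, and a non-identity element of $G$ fixes at most $2$ points of $\Omega$, we get $|\fix(x)|\leqslant2$ unless $x^{2m+1}=1$; and when $x^{2m+1}=1$ the image of $x$ in $G^+/G\cong\langle h\rangle$ has order $2m+1$, so $x$ has order exactly $2m+1$. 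Assume then that $x$ has order $2m+1$ and $|\fix(x)|\geqslant2$; pick $\omega_1,\omega_2\in\fix(x)$ and choose $s\in G$ taking $\{\omega_1,\omega_2\}$ to a pair $\{\mu_1,\mu_2\}$ of fixed points of $h$ (possible by $2$-transitivity). Then $sxs^{-1}$ lies in $G^+_{\mu_1\mu_2}\cap Gh$; now $G^+_{\mu_1\mu_2}=G_{\mu_1\mu_2}\rtimes\langle h\rangle\cong\bbF_q^{\times}\rtimes\Gal(\bbF_q/\bbF_2)$ with the Galois group acting as $t\mapsto t^2$ on $\bbF_q^{\times}$, and a short Hilbert~90-style computation (using $\gcd(2^j-1,\,q-1)=1$ for $j$ coprime to $2m+1$) shows that \emph{every} element of the non-trivial coset $\bbF_q^{\times}h$ of this group is conjugate to $h$ inside it. Hence $sxs^{-1}$, and so $x$, is $G^+$-conjugate to $h$, giving $|\fix(x)|=|\fix(h)|=5$. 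This establishes $d_{q^2+1}(h,\Sz(q))=q^2-4$, and in particular $\Cr_{q^2+1}(\Sz(q))\geqslant q^2-4$.

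\emph{The upper bound.} It suffices to show that for every $v\in\Sy_{q^2+1}$ there is a $g\in G$ with $|\fix(vg^{-1})|\geqslant3$, for then $d_{q^2+1}(v,\Sz(q))\leqslant q^2-2$. Equivalently, since $g\in G$ agrees with $v$ on three points $\alpha,\beta,\gamma$ precisely when $(\alpha,\beta,\gamma)$ and $(v(\alpha),v(\beta),v(\gamma))$ lie in a common $G$-orbit, one must rule out that $v$ sends every ordered triple of distinct points of $\Omega$ to a triple of a different $G$-orbit. As the three-point stabiliser of $G$ is trivial, $G$ has exactly $q+1$ orbits on ordered triples, and for a fixed pair $\{\alpha,\beta\}$ these orbits are indexed by the $q+1$ orbits of $G_{\alpha\beta}$ on $\Omega\setminus\{\alpha,\beta\}$, equivalently by the $q+1$ circles through $\alpha$ and $\beta$ in the inversive plane of order $q$ carried by the ovoid. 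I would then derive a contradiction from the assumed behaviour of $v$ by a Hall/defect argument on the $(q+1)\times(q+1)$ matrices that, for each pair $\{\alpha,\beta\}$, record the intersection sizes of the $G_{\alpha\beta}$-orbit partition of $\Omega\setminus\{\alpha,\beta\}$ with the $v$-preimage of the $G_{v(\alpha)v(\beta)}$-orbit partition: each image block has size only $q-1<q+1$, hence misses at least two of the circles, and this deficiency, forced simultaneously over all pairs, is incompatible with $v$ changing the type of every triple.

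I expect this last combinatorial step to be the main obstacle, and it should run closely parallel to the $q$-even case of $\PSU(3,q)$ in Theorem~\ref{BoundPSU}, with $q^2+1$ and the Suzuki ovoid playing the roles of $q^3+1$ and the Hermitian unital. Granting it, the two bounds combine to give $q^2-4\leqslant\Cr_{q^2+1}(\Sz(q))\leqslant q^2-2$ together with $d_{q^2+1}(h,\Sz(q))=q^2-4$.
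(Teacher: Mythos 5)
Your treatment of the equality $d_{q^2+1}(h,\Sz(q))=q^2-4$ is sound and is essentially the paper's argument: both reduce, via $2$-transitivity, an element of the coset $\Sz(q)h$ with at least two fixed points to an element of the two-point stabiliser $X_{\infty o}=Y\rtimes\langle h\rangle$ lying in the coset $Yh$, where $Y=\{y_\kappa\mid\kappa\in\bbF_q^\times\}$ is the cyclic group of ``diagonal'' maps of order $q-1$. You then argue that every element of $Yh$ is conjugate to $h$ (correct: in characteristic $2$ the relevant twisted-conjugation map on $\bbF_q^\times$ is a bijection) and that $h$ fixes exactly the five points of the $\Sz(2)$-subovoid, while the paper instead solves the explicit system $\kappa a=a^2$, $\kappa^{\ell+1}b=b^2$, $\kappa^{\ell+2}c=c^2$, $c=ab+a^{\ell+2}+b^\ell$ and finds exactly four affine solutions for every $\kappa$ (Lemma~\ref{SystemSz}); these are equivalent, and your preliminary case split on $x^{2m+1}$ is unnecessary but harmless. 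So the ``moreover'' clause and the lower bound $\Cr_{q^2+1}(\Sz(q))\geqslant q^2-4$ are in order.

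The genuine gap is the upper bound $\Cr_{q^2+1}(\Sz(q))\leqslant q^2-2$. You correctly reduce it to showing that every $v\in\Sy_{q^2+1}$ agrees with some $g\in\Sz(q)$ on at least three points, but what follows is only a sketch of a Hall/defect-type counting argument over the $q+1$ circles through each pair, which you yourself describe as ``the main obstacle'' and do not carry out. As written, nothing forces the claimed contradiction: the observation that each image block of size $q-1$ misses at least two of the $q+1$ circles is a purely local, per-pair statement, and you supply no mechanism for assembling these local deficiencies into a global obstruction to $v$ changing the $G$-orbit of every triple. The paper does not attempt any such argument; it obtains $\Cr_{q^2+1}(\Sz(q))\leqslant q^2-1$ from \cite[Proposition~16]{CW2005} and then rules out equality by citing \cite[Theorem~21]{CW2005}, giving $q^2-2$ in two lines. (Your remark that the combinatorics ``should run closely parallel to the $q$-even case of $\PSU(3,q)$'' does not help, since that case is settled in the paper by the very same citation.) Until this combinatorial step is actually proved, or replaced by the Cameron--Wanless results, the upper bound --- and hence the theorem --- is not established.
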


\begin{theorem}\label{BoundRee}
Let $q=3^{2m+1}$. Then $q^3-27\leqslant\Cr_{q^3+1}(\Ree(q))\leqslant q^3-1$. Moreover, for the field automorphism $h$ defined in~\eqref{eq10}, $d_{q^3+1}(h,\Ree(q))=q^3-27$.
\end{theorem}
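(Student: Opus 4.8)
The plan is to separate the two bounds. The upper bound $\Cr_{q^3+1}(\Ree(q))\leqslant q^3-1$ needs no work: $\Ree(q)$ is $2$-transitive of degree $n:=q^3+1$, so the Cameron--Wanless bound (the Proposition above, with $t=2$) gives exactly $q^3-1$. Since $\Cr_n(\Ree(q))\geqslant d_n(h,\Ree(q))$, the lower bound $\Cr_n(\Ree(q))\geqslant q^3-27$ will follow from the more precise statement $d_n(h,\Ree(q))=q^3-27$, so I would concentrate on that. Because $d_n(h,\Ree(q))=n-\max\{|\fix(hg^{-1})|:g\in\Ree(q)\}$ and $\{hg^{-1}:g\in\Ree(q)\}=h\Ree(q)$, this is equivalent to proving
\[
\max\{\,|\fix(x)| : x\in h\Ree(q)\,\}=28 .
\]

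For the inequality ``$\geqslant 28$'' I would take $x=h$. The field automorphism $h$ of~\eqref{eq10} is induced by a power $z\mapsto z^{3^k}$ of the Frobenius of $\bbF_q$ with $\gcd(k,2m+1)=1$, and on the Ree unital it fixes pointwise the sub-unital carried by the $\bbF_3$-rational points, which has $3^3+1=28$ points. Hence $|\fix(h)|=28$, and as $h=h\cdot 1^{-1}$ with $1\in\Ree(q)$ the maximum is at least $28$; equivalently $d_n(h,\Ree(q))\leqslant q^3-27$.

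The substance is the reverse inequality: $|\fix(x)|\leqslant 28$ for every $x\in h\Ree(q)$. If $\fix(x)=\varnothing$ there is nothing to prove, so suppose $x$ fixes a point $P$. Using transitivity of $\Ree(q)$, pick $g_0\in\Ree(q)$ with $g_0P=\infty$, where $\infty$ is the point whose stabiliser is the Borel subgroup $B=U\rtimes T$ with $|U|=q^3$ and $T$ cyclic of order $q-1$; then $g_0xg_0^{-1}$ fixes $\infty$, has the same number of fixed points as $x$, and still lies in $h\Ree(q)$ because $\Out(\Ree(q))$ is cyclic, hence abelian, so conjugation by $\Ree(q)$ preserves every coset of $\Ree(q)$ in $\Aut(\Ree(q))$. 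We may thus assume $x=hb$ with $b=ut\in B$. Identifying the remaining $q^3$ points with $U$ so that $U$ acts by translation while $t$ and $h$ act as automorphisms of $U$, the permutation $x$ acts on $U$ by $v\mapsto v^{\alpha}c$ for a fixed $c\in U$ and a single $\alpha\in\Aut(U)$ obtained by composing the automorphism $z\mapsto z^{3^k}$ with the automorphism induced by $t$. If $v_1^{\alpha}c=v_1$ and $v_2^{\alpha}c=v_2$ then $(v_2v_1^{-1})^{\alpha}=v_2v_1^{-1}$, so the fixed points of $x$ in $U$ lie in a single coset of the subgroup $\{v\in U:v^{\alpha}=v\}$, and therefore $|\fix(x)|\leqslant 1+|\{v\in U:v^{\alpha}=v\}|$. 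Finally, I would bound the latter using the characteristic central series $1<Z_1<Z_2<U$ of the class-$3$ group $U$, whose three factors are each isomorphic to $(\bbF_q,+)$ and on each of which $\alpha$ acts as $z\mapsto c_iz^{3^k}$ for some $c_i\in\bbF_q^{\times}$; the number of fixed points on such a factor is at most $1+\gcd(3^k-1,q-1)=1+(3^{\gcd(k,2m+1)}-1)=3$, using $\gcd(a^s-1,a^t-1)=a^{\gcd(s,t)}-1$ and $\gcd(k,2m+1)=1$. Propagating this up the series gives $|\{v\in U:v^{\alpha}=v\}|\leqslant 3^3=27$ and hence $|\fix(x)|\leqslant 28$, completing the equality above and so yielding $d_n(h,\Ree(q))=q^3-27$.

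I expect the real work to be not in these reductions but in the coordinate model of $\Ree(q)$ acting on the Ree unital that precedes this proof: fixing the explicit realisation of $U$, its characteristic series, the induced actions of $T$ and of the field automorphism, and the identification of $\fix(h)$ with the $28$-point sub-unital. Two points also deserve care: that $\Ree(q)$-conjugation genuinely preserves the coset $h\Ree(q)$ (this is precisely where cyclicity of $\Out(\Ree(q))$ is used), and the degenerate case $q=3$, where $h$ is trivial and both displayed bounds are vacuous.
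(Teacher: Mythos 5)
Your proposal is correct, and it reaches the key estimate $\max\{|\fix(x)|:x\in h\Ree(q)\}=28$ by a genuinely different reduction than the paper. The paper conjugates a putative element with at least two fixed points so that it fixes \emph{both} $\infty$ and $o=(0,0,0)$, identifies the two-point stabiliser of $X=\Ree(q)\rtimes\langle h\rangle$ as $Y\rtimes\langle h\rangle$ with $Y$ the torus $(a,b,c)\mapsto(\kappa a,\kappa^{\ell+1}b,\kappa^{\ell+2}c)$ (by an order count using $2$-transitivity and the citation to Dixon--Mortimer), and then reads off the fixed points directly from the three monomial equations $\kappa a=a^3$, $\kappa^{\ell+1}b=b^3$, $\kappa^{\ell+2}c=c^3$, each with at most three roots. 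You instead conjugate to fix only $\infty$, write $x=hb$ with $b$ in the Borel subgroup, use regularity of $U$ on $\Omega\setminus\{\infty\}$ to show the fixed points form (at most) a coset of $\fix_U(\alpha)$, and bound $|\fix_U(\alpha)|\leqslant 3^3$ by climbing the upper central series of $U$. Both routes are sound; the coset-plus-central-series argument is the extra work you pay for not normalising a second fixed point, and in return your bound applies verbatim to elements with a nontrivial unipotent part and would transfer to other rank-one situations, whereas the paper's route is more economical given its explicit coordinate model (your central-series computation on the three $\bbF_q$-factors is, in coordinates, exactly the paper's system of equations with $c=1$). Your structural inputs (nilpotency class $3$ of $U$, factors isomorphic to $(\bbF_q,+)$ with $T\langle h\rangle$ acting semilinearly) are comparable in weight to the paper's citation of the two-point stabiliser, and you correctly flag them, along with the preservation of the coset $h\Ree(q)$ under $\Ree(q)$-conjugation and the degenerate case $q=3$. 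Your witness for the value $28$ ($x=h$ fixing the $\bbF_3$-rational sub-unital) is the same as the paper's ($y_1h^{-1}=h^{-1}$).
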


Cameron and Wanless~\cite{CW2005} mentioned a geometric interpretation for their work, namely a link between the covering radius of $\PGL(2, q)$ and the classical Minkowski planes. However they did not give many details. In Section~\ref{sec4} we explain this link in detail and then establish a similar link between the covering radius of $\PGU(3, q)$ and a certain finite geometry $\mathcal{U}_{2,2}$. We also pose the problem of characterising the classical ovoids of $\mathcal{U}_{2,2}$ via incidence geometry.

\section{Unitary groups}\label{sec1}

Let $q=p^f$ with $p$ prime, and $V$ be a $3$-dimensional vector space over $\bbF_{q^2}$ equipped with the unitary form
\begin{equation}\label{eq5}
x_1y_3^q+x_2y_2^q+x_3y_1^q
\end{equation}
for any vectors $(x_1,x_2,x_3)$ and $(y_1,y_2,y_3)$ in $V$. Denote by $\Omega$ the set of totally isotropic one-dimensional subspaces of $V$ and $\varphi$ the quotient map of $\GU(V)$ modulo its center. Then $|\Omega|=q^3+1$ and $\GU(V)^\varphi=\PGU(3,q)$ is a permutation group on $\Omega$. For any $\langle(x_1,x_2,x_3)\rangle\in V$, let
\begin{equation}\label{eq6}
\langle(x_1,x_2,x_3)\rangle^h=\langle(x_1^p,x_2^p,x_3^p)\rangle.
\end{equation}
Then $h$ is a permutation of $V$, and we have $\PGU(3,q)\rtimes\langle h\rangle=\PGaU(3,q)$.

\begin{lemma}\label{SystemPSU}
For each $\gamma\in\bbF_{q^2}^\times$ and $\delta\in\bbF_{q^2}$ such that $\delta^{q+1}=1$, the system of equations
\begin{equation}\label{eq3}
\begin{cases}
a^{p-1}=\gamma^{q+1}\\
b^p=b\gamma^q\delta\\
a+a^q+b^{q+1}=0
\end{cases}
\end{equation}
has at most $p-1$ solutions $(a,b)\in\bbF_{q^2}\times\bbF_{q^2}$.
\end{lemma}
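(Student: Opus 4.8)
The plan is to analyze the three equations in \eqref{eq3} one at a time, extracting from each a constraint on the pair $(a,b)$, and then to combine them to bound the number of solutions. First I would look at the equation $a^{p-1}=\gamma^{q+1}$: since $\gamma\in\bbF_{q^2}^\times$ is fixed, this is a single equation of degree $p-1$ in $a$, so it has at most $p-1$ solutions for $a$, and for each such $a$ the nonzero solutions form a coset of the group of $(p-1)$-th roots of unity in $\bbF_{q^2}^\times$; note $a\neq0$ automatically since the right-hand side is nonzero. So the whole count will follow if I can show that $a$ \emph{determines} $b$ (at most one $b$ per $a$), or more precisely that the map sending a solution $(a,b)$ to $a$ is injective, or at worst that the total over all valid $a$ is still at most $p-1$.

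The key is the second equation $b^p=b\gamma^q\delta$. If $b=0$ this is satisfied, and then the third equation forces $a+a^q=0$, i.e.\ $a$ lies in the kernel of the trace-like map $x\mapsto x+x^q$ from $\bbF_{q^2}$ to $\bbF_q$; combined with $a^{p-1}=\gamma^{q+1}\in\bbF_q^\times$ (since $(\gamma^{q+1})^q=\gamma^{q^2+q}=\gamma^{q+1}$) one should be able to see that $b=0$ yields at most one value of $a$, or none. If $b\neq0$, then dividing gives $b^{p-1}=\gamma^q\delta$, again a degree-$(p-1)$ equation pinning $b$ to one coset of the $(p-1)$-th roots of unity. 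The crucial observation I would aim to exploit is that $a^{p-1}$ and $b^{p-1}$ are then both prescribed, and moreover $a^{p-1}\cdot(\text{something involving }b^{p-1})$ is constrained; the idea is that the ratio $a/b^{?}$ or a suitable monomial is forced to lie in $\mu_{p-1}$, so that once one coordinate is chosen the other is determined up to a common $(p-1)$-th root of unity factor, and the third (additive) equation $a+a^q+b^{q+1}=0$ then cuts this down — an additive equation is generally incompatible with a whole multiplicative coset unless it degenerates, so it should eliminate all but at most one representative, leaving the coset $\mu_{p-1}$ itself as the only freedom, hence at most $p-1$ solutions.

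The main obstacle I anticipate is handling the interaction between the multiplicative constraints (first two equations, which naturally produce cosets of $\mu_{p-1}$) and the additive constraint (third equation): I need to show the additive equation is not identically satisfied on such a coset, which will require using that $\delta^{q+1}=1$ and the precise exponents $q+1$ and $p$ appearing, probably via applying the $q$-power Frobenius to the second equation and substituting. A secondary technical point is the case analysis on whether $a$ or $b$ vanishes and on whether the relevant coset-generating element is itself a $(p-1)$-th power; I would treat $b=0$ separately as above, and for $b\neq0$ reduce everything to a single polynomial equation in one variable of degree at most $p-1$ after eliminating the other variable, so that the bound falls out of counting roots. If a clean elimination is not available, the fallback is to parametrize $a=a_0\zeta$, $b=b_0\eta$ with $\zeta,\eta\in\mu_{p-1}$ and fixed $a_0,b_0$, substitute into the third equation, and show the resulting relation between $\zeta$ and $\eta$ in $\mu_{p-1}$ admits at most $p-1$ solution pairs.
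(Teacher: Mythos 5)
Your overall plan (exploit the coset structure from the two multiplicative equations and let the additive third equation cut things down) is in the right spirit, but it has a concrete false step and is missing the one observation that makes the paper's proof work. The false step is your claim that the case $b=0$ ``yields at most one value of $a$, or none.'' In fact, when $\gamma^{(q^2-1)/(p-1)}=-1$ the system has exactly $p-1$ solutions, all of the form $(a,0)$: the first equation gives $a^{q-1}=(a^{p-1})^{(q-1)/(p-1)}=\gamma^{(q^2-1)/(p-1)}=-1$, so $a+a^q=a(1+a^{q-1})=0$ holds identically on the whole coset of $\mu_{p-1}$, the third equation then forces $b=0$, and the additive equation does \emph{not} eliminate any coset representatives. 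So your heuristic that the additive equation is ``generally incompatible with a whole multiplicative coset unless it degenerates'' lands squarely in the degenerate case here, and your accounting (one $a$ from $b=0$, plus up to $p-1$ from $b\neq0$) would not give the bound $p-1$ if both kinds of solutions could coexist.

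The missing idea is that the first equation pins down not just a coset for $a$ but the exact value $a^{q-1}=\gamma^{(q^2-1)/(p-1)}=:c$, a constant depending only on $\gamma$. This linearises the third equation to $b^{q+1}=-(1+c)a$ and yields a clean trichotomy on $c$: if $c=-1$ then every solution has $b=0$ and there are at most $p-1$ choices of $a$; if $c=1$ (forcing $p>2$) then $b\neq 0$ for every solution, $b$ is restricted to at most $p-1$ values by $b^{p-1}=\gamma^q\delta$, and $a=-b^{q+1}/2$ is determined by $b$ (note the determination goes from $b$ to $a$, the opposite direction from the one you propose --- the third equation alone gives $q+1$ values of $b$ for each $a$); and if $c\neq\pm1$ then $b\neq0$ and raising $b^{p-1}=\gamma^q\delta$ to the power $(q^2-1)/(p-1)$, using $\delta^{q+1}=1$, gives $1=(\gamma^{q+1})^{(q-1)/(p-1)}=c$, a contradiction, so there are no solutions at all. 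In particular the $b=0$ and $b\neq0$ regimes can never both occur for the same $\gamma$, which is what saves the count; your sketch never establishes this disjointness, and without it the bound does not follow.
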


\begin{proof}
We divide it into three cases by the value of $\gamma^{(q^2-1)/(p-1)}$.

\underline{Case~1.} $\gamma^{(q^2-1)/(p-1)}=-1$. Let $(a,b)\in\bbF_{q^2}\times\bbF_{q^2}$ be a solution of~\eqref{eq3}. Then by the first line of~\eqref{eq3} we have
\begin{eqnarray*}
a^q+a&=&a\left((a^{p-1})^{(q-1)/(p-1)}+1\right)\\
&=&a\left((\gamma^{q+1})^{(q-1)/(p-1)}+1\right)=a\left(\gamma^{(q^2-1)/(p-1)}+1\right)=0,
\end{eqnarray*}
which together with the third line of~\eqref{eq3} yields $b=0$. Moreover, the first line of~\eqref{eq3} has at most $p-1$ solutions for $a$. Hence there are at most $p-1$ such pairs $(a,b)$.

\underline{Case~2.} $\gamma^{(q^2-1)/(p-1)}=1$ and $\gamma^{(q^2-1)/(p-1)}\neq-1$. Note that this condition indicates $p>2$. Let $(a,b)\in\bbF_{q^2}\times\bbF_{q^2}$ be a solution of~\eqref{eq3}. Then by the first line and third line of~\eqref{eq3} we have $a\neq0$ (since $\gamma\neq0$) and
\begin{eqnarray}\label{eq4}
2a=a\left(\gamma^{(q^2-1)/(p-1)}+1\right)&=&a\left((\gamma^{q+1})^{(q-1)/(p-1)}+1\right)\\
&=&a\left((a^{p-1})^{(q-1)/(p-1)}+1\right)=a^q+a=-b^{q+1}.\nonumber
\end{eqnarray}
In particular, $b\neq0$. It then follows from the second line of~\eqref{eq3} that $b^{p-1}=\gamma^q\delta$, which has at most $p-1$ solutions for $b$. Thus, since $a$ is determined by $b$ as~\eqref{eq4} shows, we conclude that there are at most $p-1$ such $(a,b)$.

\underline{Case~3.} $\gamma^{(q^2-1)/(p-1)}\neq\pm1$. In this case, any solution $(a,b)\in\bbF_{q^2}\times\bbF_{q^2}$ of~\eqref{eq3} would satisfy (using the first and third lines of~\eqref{eq3})
\begin{eqnarray*}
b^{q+1}=-a(a^{q-1}+1)&=&-a\left((a^{p-1})^{(q-1)/(p-1)}+1\right)\\
&=&-a\left((\gamma^{q+1})^{(q-1)/(p-1)}+1\right)=-a\left(\gamma^{(q^2-1)/(p-1)}+1\right)\neq0
\end{eqnarray*}
and hence (using the second line of~\eqref{eq3})
\begin{eqnarray*}
1=b^{q^2-1}&=&(b^{p-1})^{(q^2-1)/(p-1)}\\
&=&(\gamma^q\delta)^{(q^2-1)/(p-1)}=\left(\gamma^{q^2+q}\delta^{q+1}\right)^{(q-1)/(p-1)}=(\gamma^{q+1})^{(q-1)/(p-1)},
\end{eqnarray*}
contrary to the condition $(\gamma^{q+1})^{(q-1)/(p-1)}=\gamma^{(q^2-1)/(p-1)}\neq1$. This completes the proof.
\end{proof}

\begin{lemma}\label{FixPSU}
For each $g\in\PGU(3,q)$ we have $|\fix(gh^{-1})|\leqslant p+1$, where $h$ is defined as in~\eqref{eq6}.
\end{lemma}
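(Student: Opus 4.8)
The plan is to show that the fixed points of $gh^{-1}$ on $\Omega$ correspond to solutions of a system of the shape handled by Lemma~\ref{SystemPSU}, together with a bounded number of exceptional points, and then invoke that lemma. First I would recall that a point $\langle v\rangle\in\Omega$ is fixed by $gh^{-1}$ precisely when $\langle v\rangle^{gh^{-1}}=\langle v\rangle$, i.e. $\langle v^g\rangle=\langle v^h\rangle=\langle v^{(p)}\rangle$, where $v^{(p)}$ denotes the coordinatewise $p$-th power. Since $\PGU(3,q)$ acts $2$-transitively on $\Omega$, I may first conjugate $g$ (equivalently, change coordinates by an element of $\GU(V)$, which commutes with $h$ up to an element of $\PGU(3,q)$ — more precisely, replacing $g$ by $x^{-1}gx^h$ for $x\in\GU(V)$ replaces $gh^{-1}$ by a conjugate) so as to normalise the action of $gh^{-1}$ on two of the totally isotropic points. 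Using the unitary form \eqref{eq5}, the points $e_1=\langle(1,0,0)\rangle$ and $e_3=\langle(0,0,1)\rangle$ are totally isotropic, and the stabiliser of the flag $(e_1,e_3)$ is a Borel-type subgroup whose unipotent radical together with a torus acts in an explicit triangular way on the remaining totally isotropic points, which can be parametrised as $\langle(a,b,1)\rangle$ with $a+a^q+b^{q+1}=0$ (the isotropy condition coming from \eqref{eq5}).

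The key computation will be to write $g$ in this parametrised form and extract from the fixed-point equation $\langle v^g\rangle=\langle v^{(p)}\rangle$ exactly the three equations \eqref{eq3}. Concretely: after choosing coordinates so that $gh^{-1}$ fixes $e_1$ and $e_3$ (at the cost of discarding at most these two points, or rather building them into the count), a general totally isotropic point $\langle(a,b,1)\rangle$ is fixed iff applying $g$ to $(a,b,1)$ gives a scalar multiple of $(a^p,b^p,1)$. Writing out $g$ as a matrix that stabilises $\langle e_1\rangle$ and $\langle e_3\rangle$ — hence lower-triangular in a suitable basis — and imposing that it preserves the form \eqref{eq5}, one finds $g$ is determined by a parameter $\gamma\in\bbF_{q^2}^\times$ on the diagonal and a parameter $\delta$ with $\delta^{q+1}=1$ (a norm-one element governing the middle coordinate), plus the unipotent part; comparing the three coordinates of $(a,b,1)^g$ with those of $(a^p,b^p,1)$ up to the common scalar yields precisely $a^{p-1}=\gamma^{q+1}$, $b^p=b\gamma^q\delta$, and the isotropy relation $a+a^q+b^{q+1}=0$. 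Lemma~\ref{SystemPSU} then bounds the number of such $\langle(a,b,1)\rangle$ by $p-1$, and adding back the (at most two) points $e_1,e_3$ gives $|\fix(gh^{-1})|\leqslant(p-1)+2=p+1$, as required. One must also handle the degenerate possibility that $gh^{-1}$ fixes fewer than two points to begin with — then there is nothing to prove, since $p+1\geqslant3>|\fix(gh^{-1})|$ would already follow, or one fixes one point and still gets a cleaner bound.

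I expect the main obstacle to be the bookkeeping in the normalisation step: justifying cleanly that we may assume $gh^{-1}$ fixes $e_1$ and $e_3$ without losing generality, since conjugating $g$ by $x\in\GU(V)$ does not literally conjugate $gh^{-1}$ by $x$ unless $x^h=x$, so one must instead track how $h$ interacts with the change of basis (the conjugate of $gh^{-1}$ by $x\in\GU(V)$ is $x^{-1}gh^{-1}x = x^{-1}g\,(x^h)^{-1}h^{-1} = (x^{-1}g(x^h)^{-1})h^{-1}$, which is again of the form $g'h^{-1}$ with $g'\in\PGU(3,q)$, so the class of elements $\{gh^{-1}: g\in\PGU(3,q)\}$ is closed under $\GU(V)$-conjugation and $|\fix|$ is a conjugation invariant). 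Once that reduction is in place, identifying the correct matrix form for $g$ and reading off \eqref{eq3} is a direct but somewhat delicate unitary-geometry calculation; the number-theoretic heart of the argument has already been isolated in Lemma~\ref{SystemPSU}.
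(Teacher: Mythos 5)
Your proposal is correct and follows essentially the same route as the paper: use $2$-transitivity to reduce to an element of the form $yh^{-1}$ fixing $\langle e_1\rangle$ and $\langle e_3\rangle$ with $y$ in the diagonal subgroup parametrised by $\gamma\in\bbF_{q^2}^\times$ and $\delta^{q+1}=1$, read off the system~\eqref{eq3}, apply Lemma~\ref{SystemPSU}, and add back the two normalised points (the paper pins down the two-point stabiliser by an order count in $\PGaU(3,q)$ rather than by direct matrix computation, but the outcome is identical). Two cosmetic slips worth fixing: the stabiliser of both $\langle e_1\rangle$ and $\langle e_3\rangle$ is a torus, so there is no ``unipotent part'' and the matrix is diagonal rather than merely lower-triangular; and the conjugation identity should read $x^{-1}gh^{-1}x=(x^{-1}gx^{h})h^{-1}$ with $x^{h}=h^{-1}xh$, though your conclusion that the set $\{gh^{-1}\}$ is closed under conjugation is right.
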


\begin{proof}
Suppose $g$ is an element of $\PGU(3,q)$ such that $gh^{-1}$ fixes at least $2$ points of $\Omega$, say $\langle u\rangle$ and $\langle v\rangle$. Note that $\langle e_1\rangle$ and $\langle e_3\rangle$ lie in $\Omega$ by~\eqref{eq5}, where $e_1=(1,0,0)$ and $e_3=(0,0,1)$. Since $\PGU(3,q)$ is $2$-transitive on $\Omega$, there exists $x\in\PGU(3,q)$ such that $\langle u\rangle^x=\langle e_1\rangle$ and $\langle v\rangle^x=\langle e_3\rangle$. Accordingly, $x^{-1}gh^{-1}x$ fixes $\langle e_1\rangle$ and $\langle e_3\rangle$.

Let $X=\langle\PGU(3,q),h\rangle=\PGU(3,q)\rtimes\langle h\rangle=\PGaU(3,q)$ and
\[
Y=\left\{
\begin{pmatrix}
\gamma^{q+1}&&\\
&\gamma^q\delta&\\
&&1
\end{pmatrix}
\ \middle|\ \gamma,\delta\in\bbF_{q^2}^\times,\ \delta^{q+1}=1\right\}.
\]
Observe that $Y$ preserves the form in~\eqref{eq5}. We have $Y^\varphi\leqslant\PGU(3,q)$ and $Y^\varphi\rtimes\langle h\rangle\leqslant X_{\langle e_1\rangle\langle e_3\rangle}$. Define a map
\[
\theta:(\gamma,\delta)\mapsto
\begin{pmatrix}
\gamma^{q+1}&&\\
&\gamma^q\delta&\\
&&1
\end{pmatrix}
^\varphi
\]
from $\bbF_{q^2}^\times\times\{\delta\in\bbF_{q^2}^\times\mid\delta^{q+1}=1\}$ to $Y^\varphi$. Then $\theta$ is a group empimorphism, and
\begin{align*}
|\ker(\theta)|&=|\{\gamma,\delta\in\bbF_{q^2}^\times\mid\gamma^{q+1}=1,\delta=\gamma^{-q},\delta^{q+1}=1\}|\\
&=\{\gamma\in\bbF_{q^2}^\times\mid\gamma^{q+1}=1\}=q+1.
\end{align*}
Consequently, $|Y^\varphi|=(q^2-1)(q+1)/|\ker(\theta)|=q^2-1$. Since $X$ is $2$-transitive on $\Omega$, it follows that
\[
|X_{\langle e_1\rangle\langle e_3\rangle}|=\frac{|X|}{|\Omega|(|\Omega|-1)}=\frac{|\PGaU(3,q)|}{(q^3+1)q^3}=2(q^2-1)f=|Y^\varphi\rtimes\langle h\rangle|.
\]
This implies that $X_{\langle e_1\rangle\langle e_3\rangle}=Y^\varphi\rtimes\langle h\rangle$.

Now as $x^{-1}gh^{-1}x\in X_{\langle e_1\rangle\langle e_3\rangle}$, there exists an integer $i$ such that
\[
x^{-1}gh^{-1}x=yh^i
\]
for some $y\in Y^\varphi$. Since $\PGaU(3,q)=\PGU(3,q)\rtimes\langle h\rangle$, we deduce that $h^i=h^{-1}$, taking both sides of the above equality modulo $\PGU(3,q)$. Hence $yh^{-1}=x^{-1}gh^{-1}x$ has the same number of fixed points as $gh^{-1}$. Write
\[
y=\begin{pmatrix}
\gamma^{q+1}&&\\
&\gamma^q\delta&\\
&&1
\end{pmatrix}
^\varphi
\]
with $\gamma\in\bbF_{q^2}^\times$, $\delta\in\bbF_{q^2}^\times$ and $\delta^{q+1}=1$. To complete the proof, we show that $|\fix(yh^{-1})\setminus\{\langle e_1\rangle,\langle e_3\rangle\}|\leqslant p-1$.

For any fixed point $\alpha=\langle(\alpha_1,\alpha_2,\alpha_3)\rangle$ of $yh^{-1}$ other than $\langle e_1\rangle$ and $\langle e_3\rangle$, we have
\begin{equation}\label{eq1}
\alpha_1\alpha_3^q+\alpha_2^{q+1}+\alpha_3\alpha_1^q=0
\end{equation}
by~\eqref{eq5}. If $\alpha_1=0$ or $\alpha_3=0$, then~\eqref{eq1} would imply $\alpha_2=0$, contrary to the assumption that $\alpha\neq\langle e_1\rangle$ or $\langle e_3\rangle$. Consequently, $\alpha=\langle(a,b,1)\rangle$ for some $(a,b)\in\bbF_{q^2}\times\bbF_{q^2}$ with $a\neq0$. It follows from~\eqref{eq5} that
\[
a+a^q+b^{q+1}=0,
\]
and
\begin{equation}\label{eq2}
\langle(a\gamma^{q+1},b\gamma^q\delta,1)\rangle=\langle(a^p,b^p,1)\rangle
\end{equation}
since $\alpha^y=\alpha^h$. Note that~\eqref{eq2} is equivalent to
\[
\begin{cases}
a^{p-1}=\gamma^{q+1}\\
b^p=b\gamma^q\delta\\
\end{cases}
\]
as $a\neq0$. We then conclude by Lemma~\ref{SystemPSU} that there are at most $p-1$ such $\alpha$, completing the proof.
\end{proof}

\begin{lemma}\label{FixPGU}
$d_{q^3+1}(h,\PGU(3,q))=q^3-p$, where $h$ is defined as in~\eqref{eq6}.
\end{lemma}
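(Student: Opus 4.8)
The plan is to combine the upper bound from Lemma~\ref{FixPSU} with a matching lower bound coming from an explicit choice of $\gamma$ and $\delta$ for which the system~\eqref{eq3} actually attains its maximum of $p-1$ solutions. Recall that $d_{q^3+1}(h,\PGU(3,q))=q^3+1-\max_{g\in\PGU(3,q)}|\fix(gh^{-1})|$, so the statement is equivalent to $\max_{g}|\fix(gh^{-1})|=p+1$. Lemma~\ref{FixPSU} already gives $|\fix(gh^{-1})|\leqslant p+1$ for every $g$, so the only thing left is to exhibit a single $g\in\PGU(3,q)$ with $|\fix(gh^{-1})|=p+1$.

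First I would reduce to the diagonal form used in the proof of Lemma~\ref{FixPSU}: it suffices to find $\gamma\in\bbF_{q^2}^\times$ and $\delta\in\bbF_{q^2}^\times$ with $\delta^{q+1}=1$ such that the element $y=\mathrm{diag}(\gamma^{q+1},\gamma^q\delta,1)^\varphi\in Y^\varphi\leqslant\PGU(3,q)$ has $yh^{-1}$ fixing the two points $\langle e_1\rangle$, $\langle e_3\rangle$ together with exactly $p-1$ further points $\langle(a,b,1)\rangle$; as shown there, these extra fixed points correspond bijectively to solutions $(a,b)$ of~\eqref{eq3}. So the task becomes: choose $\gamma,\delta$ so that~\eqref{eq3} has exactly $p-1$ solutions. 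The natural candidate is Case~1 of Lemma~\ref{SystemPSU}, i.e.\ pick $\gamma$ with $\gamma^{(q^2-1)/(p-1)}=-1$; then as computed there every solution forces $b=0$, and the system collapses to the single equation $a^{p-1}=\gamma^{q+1}$ together with $a+a^q=0$. I would then verify that such a $\gamma$ exists (it does, since $(q^2-1)/(p-1)$ is even when $p$ is odd, and for $p=2$ one handles the even case separately, consistent with the ``$q$ even'' clause of Theorem~\ref{BoundPSU} giving $q^3-2=q^3-p$), and check that the $p-1$ roots $a$ of $a^{p-1}=\gamma^{q+1}$ all satisfy $a^q=-a$ — this follows because $a^{q-1}=(a^{p-1})^{(q-1)/(p-1)}=(\gamma^{q+1})^{(q-1)/(p-1)}=\gamma^{(q^2-1)/(p-1)}=-1$. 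With $b=0$, the form condition $a+a^q+b^{q+1}=a+a^q=0$ holds automatically, so each of these $p-1$ values of $a$ genuinely yields a fixed point $\langle(a,0,1)\rangle\in\Omega$ of $yh^{-1}$, and the second line $b^p=b\gamma^q\delta$ reads $0=0$, so $\delta$ may be chosen arbitrarily among the $(q+1)$-th roots of unity.

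The main point requiring care — the step I expect to be the genuine obstacle rather than routine bookkeeping — is confirming that an $a$ with $a^{p-1}=\gamma^{q+1}$ really lies in $\bbF_{q^2}$ (so that $\langle(a,0,1)\rangle$ is a bona fide point of $\Omega$) and that there are exactly $p-1$ such $a$, not fewer. For this I would note that $\gamma^{q+1}$ is a $(q+1)$-th power norm lying in $\bbF_q^\times$, and that the map $x\mapsto x^{p-1}$ on $\bbF_{q^2}^\times$ has image of index $\gcd(p-1,q^2-1)=p-1$ with all fibres of size $p-1$; so I must check $\gamma^{q+1}$ lands in this image, equivalently $(\gamma^{q+1})^{(q^2-1)/(p-1)}=1$. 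But $(\gamma^{q+1})^{(q^2-1)/(p-1)}=\gamma^{(q^2-1)(q+1)/(p-1)}=\bigl(\gamma^{(q^2-1)/(p-1)}\bigr)^{q+1}=(-1)^{q+1}=1$ since $q$ is odd here, so indeed all $p-1$ solutions $a$ exist in $\bbF_{q^2}^\times$. Assembling these: $yh^{-1}$ fixes $\langle e_1\rangle$, $\langle e_3\rangle$ and the $p-1$ points $\langle(a,0,1)\rangle$, giving $|\fix(yh^{-1})|\geqslant p+1$, hence $=p+1$ by Lemma~\ref{FixPSU}; therefore $d_{q^3+1}(h,\PGU(3,q))=q^3+1-(p+1)=q^3-p$, as claimed.
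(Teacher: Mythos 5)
Your proof is correct and follows essentially the same route as the paper: both deduce the lower bound from Lemma~\ref{FixPSU} and then realise $p+1$ fixed points by choosing a diagonal element $y$ with $\gamma^{(q^2-1)/(p-1)}=-1$ and $b=0$ (the paper simply takes $\gamma=\omega^{(p-1)/2}$, $\delta=1$ explicitly and lists the $p-1$ extra fixed points $\langle(a,0,1)\rangle$). The only point to tidy is $p=2$, which you should settle directly (take $g=1$; then $h$ fixes $\langle e_1\rangle$, $\langle e_3\rangle$ and $\langle(1,0,1)\rangle$) rather than by appeal to Theorem~\ref{BoundPSU}, whose even-$q$ clause itself rests on this lemma.
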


\begin{proof}
By Lemma~\ref{FixPSU}, $d_{q^3+1}(h,g)\geqslant q^3+1-|\fix(gh^{-1})|\geqslant q^3-p$ for each $g\in\PGU(3,q)$. Hence $d_{q^3+1}(h,\PGU(3,q))\geqslant q^3-p$. To complete the proof, we only need to show the existence of $g\in\PGU(3,q)$ such that $d_{q^3+1}(h,g)\leqslant q^3-p$, or equivalently, the existence of $g\in\PGU(3,q)$ such that $|\fix(hg^{-1})|\geqslant p+1$. Let $u=\langle(1,0,0)\rangle\in\Omega$ and $v=\langle(0,0,1)\rangle\in\Omega$. Note that any element of $\Omega$ other than $u$ and $v$ has form $\langle(a,b,1)\rangle$ for some $(a,b)\in\bbF_{q^2}\times\bbF_{q^2}$ with $a\neq0$ and $a+a^q+b^{q+1}=0$.

First assume that $p=2$. Then $u$, $v$ and $\langle(1,0,1)\rangle$ are all elements of $\Omega$ fixed by $h$. Hence $|\fix(h)|\geqslant3$, as desired.

Next assume that $p>2$. Let $\omega$ be a generator of $\bbF_{q^2}^\times$, and
\[
g=\begin{pmatrix}
\omega^{(q+1)(p-1)/2}&&\\
&\omega^{q(p-1)/2}&\\
&&1
\end{pmatrix}
^\varphi\in\PGU(3,q).
\]
Then it is straightforward to verify that $u$, $v$ and
\[
\langle(\omega^{i(q^2-1)/(p-1)+(q+1)/2},0,1)\rangle
\]
with $1\leqslant i\leqslant p-1$ are all elements of $\Omega$ fixed by $hg^{-1}$. Hence $|\fix(hg^{-1})|\geqslant p+1$, as desired. This completes the proof.
\end{proof}

\noindent\emph{Proof of Theorem~$\ref{BoundPSU}$.} Since $\PSU(3,q)$ is a $2$-transitive subgroup of $\Sy_{q^3+1}$, we deduce from~\cite[Proposition~16]{CW2005} that $\Cr_{q^3+1}(\PSU(3,q))\leqslant q^3-1$. Moreover, equality cannot be attained if $q$ is even according to~\cite[Theorem~21]{CW2005}. Hence
\begin{equation}\label{eq11}
\Cr_{q^3+1}(\PSU(3,q))\leqslant q^3-\gcd(2,q).
\end{equation}
Let $h$ be as defined in~\eqref{eq6}. Then
\[
\Cr(\PSU(3,q))\geqslant\Cr(\PGU(3,q))\geqslant d_{q^3+1}(h,\PGU(3,q)).
\]
This together with Lemma~\ref{FixPGU} and~\eqref{eq11} verifies Theorem~\ref{BoundPSU}.

\section{Suzuki groups}\label{sec2}

Let $q=2^{2m+1}$, $\ell=2^{m+1}$, and
\[
\Omega=\{(a,b,c)\in\bbF_q^3\mid c=ab+a^{\ell+2}+b^\ell\}\cup\{\infty\}.
\]
Clearly, $|\Omega|=q^2+1$. Let
\begin{equation}\label{eq7}
\infty^h=\infty\quad\text{and}\quad(a,b,c)^h=(a^2,b^2,c^2)\text{ for any }(a,b,c)\in\bbF_q^3.
\end{equation}
Then $h$ is a permutation of $\Omega$, and $X:=\Sz(q)\rtimes\langle h\rangle$ is a permutation group on $\Omega$.  We shall prove that $d_{q^2+1}(h,\Sz(q))\geqslant q^2-4$, which gives a lower bound for the covering radius of $\Sz(q)$.

\begin{lemma}\label{SystemSz}
For each $\kappa\in\bbF_q^\times$, the system of equations
\begin{equation}\label{eq9}
\begin{cases}
\kappa a=a^2\\
\kappa^{\ell+1}b=b^2\\
\kappa^{\ell+2}c=c^2\\
c=ab+a^{\ell+2}+b^\ell
\end{cases}
\end{equation}
has exactly four solutions $(a,b,c)\in\bbF_q^3$.
\end{lemma}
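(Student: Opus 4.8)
The plan is to exploit the fact that the first two equations of~\eqref{eq9} each constrain one variable to at most two values, after which the fourth equation determines the third variable, so the whole system reduces to a short finite check. Concretely, in characteristic~$2$ the relation $\kappa a=a^2$ rewrites as $a(a+\kappa)=0$, giving $a\in\{0,\kappa\}$, and likewise $\kappa^{\ell+1}b=b^2$ gives $b\in\{0,\kappa^{\ell+1}\}$. For each of the four resulting pairs $(a,b)$, the fourth equation $c=ab+a^{\ell+2}+b^\ell$ produces a unique candidate value of $c$, so there are at most four solutions and it remains to confirm that each candidate also satisfies the third equation.

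The second step is therefore to verify, for each of the four pairs, that the value $c=ab+a^{\ell+2}+b^\ell$ lies in $\{0,\kappa^{\ell+2}\}$ (which is exactly the solution set of $\kappa^{\ell+2}c=c^2$). The pair $(0,0)$ gives $c=0$ and the pair $(\kappa,0)$ gives $c=\kappa^{\ell+2}$, both immediate. For $(0,\kappa^{\ell+1})$ one computes $c=(\kappa^{\ell+1})^\ell=\kappa^{\ell^2+\ell}$, and here I would invoke the arithmetic identity $\ell^2=2^{2m+2}=2q$ together with $\kappa^q=\kappa$ (valid since $\kappa\in\bbF_q$), so that $\kappa^{\ell^2}=\kappa^{2q}=\kappa^2$ and hence $c=\kappa^{\ell+2}$. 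For $(\kappa,\kappa^{\ell+1})$ the same identity makes each of the three summands equal to $\kappa^{\ell+2}$, so $c=\kappa^{\ell+2}+\kappa^{\ell+2}+\kappa^{\ell+2}=\kappa^{\ell+2}$ in characteristic~$2$. Thus all four candidates are genuine solutions, and since every solution must be one of them, the count is exactly four.

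I do not anticipate a real obstacle here: the argument is a finite case analysis once the first two equations are solved, and the only point worth isolating as a named lemma-step is the identity $\kappa^{\ell^2}=\kappa^2$ for $\kappa\in\bbF_q^\times$, which collapses the exponent $\ell(\ell+1)$ to $\ell+2$ and is used in both of the last two cases. The mild care needed is organisational — handling the four pairs $(a,b)$ directly rather than expanding all eight a priori combinations of $(a,b,c)$ and discarding the spurious ones.
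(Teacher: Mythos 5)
Your proof is correct and follows essentially the same route as the paper: both reduce the problem to a finite check after observing that each of the quadratic equations $\lambda x = x^2$ forces $x\in\{0,\lambda\}$, differing only in whether one enumerates the eight triples $(a,b,c)$ and tests the fourth equation (as the paper does) or the four pairs $(a,b)$ and tests the third (as you do). Your explicit use of the identity $\kappa^{\ell^2}=\kappa^{2q}=\kappa^2$ supplies the verification that the paper leaves implicit.
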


\begin{proof}
From the first three lines of~\eqref{eq9} we deduce that $a=0$ or $\kappa$, $b=0$ or $\kappa^{\ell+1}$ and $c=0$ or $\kappa^{\ell+2}$. This gives eight candidates for $(a,b,c)$. Verifying the fourth line of~\eqref{eq9} for these eight candidates we conclude that
\[
(0,0,0),\quad(0,\kappa^{\ell+1},\kappa^{\ell+2}),\quad(\kappa,0,\kappa^{\ell+2}),\quad(\kappa,\kappa^{\ell+1},\kappa^{\ell+2})
\]
are all solutions of~\eqref{eq9} and are the only ones.
\end{proof}

\begin{lemma}\label{FixSz}
$d_{q^2+1}(h,\Sz(q))=q^2-4$, where $h$ is defined as in~\eqref{eq7}.
\end{lemma}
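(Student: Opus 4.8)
The plan is to prove $d_{q^2+1}(h,\Sz(q))=q^2-4$ by establishing the two inequalities separately, mirroring the structure of the unitary case (Lemmas~\ref{FixPSU} and~\ref{FixPGU}). For the lower bound $d_{q^2+1}(h,\Sz(q))\geqslant q^2-4$, it suffices to show that $|\fix(gh^{-1})|\leqslant 4$ for every $g\in\Sz(q)$; this gives $d_{q^2+1}(h,g)\geqslant q^2+1-|\fix(gh^{-1})|\geqslant q^2-4$. To do this, suppose $gh^{-1}$ fixes at least two points of $\Omega$; since $\Sz(q)$ is $2$-transitive on $\Omega$, we may conjugate by an element of $\Sz(q)$ so that these two fixed points become the ``standard'' pair, which I would take to be $\infty$ and $(0,0,0)$. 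Then the conjugate of $gh^{-1}$ lies in the stabiliser $X_{\infty,(0,0,0)}$, where $X=\Sz(q)\rtimes\langle h\rangle$.

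The next step is to identify this two-point stabiliser explicitly. In the Suzuki group acting on the Suzuki ovoid, the stabiliser of the two points $\infty$ and $(0,0,0)$ is a cyclic group of order $q-1$ acting diagonally via $(a,b,c)\mapsto(\kappa a,\kappa^{\ell+1}b,\kappa^{\ell+2}c)$ for $\kappa\in\bbF_q^\times$ (this is the standard torus; one checks it preserves the defining relation $c=ab+a^{\ell+2}+b^\ell$ using $\ell^2\equiv 2\pmod{q-1}$ on exponents). Call this subgroup $Y$. A counting argument as in Lemma~\ref{FixPSU}, comparing $|X_{\infty,(0,0,0)}| = |X|/(|\Omega|(|\Omega|-1)) = |\Sz(q)\rtimes\langle h\rangle|/((q^2+1)q^2) = (q-1)(2m+1)$ with $|Y\rtimes\langle h\rangle| = (q-1)(2m+1)$, shows that $X_{\infty,(0,0,0)} = Y\rtimes\langle h\rangle$. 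Taking the conjugate of $gh^{-1}$ modulo $\Sz(q)$ forces its ``$h$-part'' to be $h^{-1} = h^{2m}$, so after conjugation $gh^{-1}$ becomes $y h^{-1}$ for some $y\in Y$, with the same fixed-point count as $gh^{-1}$. A point $(a,b,c)\ne\infty,(0,0,0)$ is fixed by $yh^{-1}$ precisely when $(a,b,c)^y=(a,b,c)^h$, i.e. $(\kappa a,\kappa^{\ell+1}b,\kappa^{\ell+2}c)=(a^2,b^2,c^2)$, together with the relation $c=ab+a^{\ell+2}+b^\ell$ — exactly the system~\eqref{eq9}. By Lemma~\ref{SystemSz} this system has exactly four solutions, two of which are $(0,0,0)$ and — one must check — another already-counted point, so the number of \emph{new} fixed points is at most $2$; counting $\infty$ and $(0,0,0)$ gives $|\fix(gh^{-1})|\leqslant 4$.

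For the upper bound $d_{q^2+1}(h,\Sz(q))\leqslant q^2-4$, I need to exhibit a single $g\in\Sz(q)$ with $|\fix(hg^{-1})|\geqslant 5$. The natural candidate is to take $g$ in the diagonal torus $Y$, say $g\colon(a,b,c)\mapsto(\kappa a,\kappa^{\ell+1}b,\kappa^{\ell+2}c)$ for a suitable $\kappa$, chosen (e.g. $\kappa$ a generator of $\bbF_q^\times$, or a carefully selected power) so that the solution set of~\eqref{eq9} with that $\kappa$ consists of four genuinely distinct points of $\Omega\setminus\{\infty,(0,0,0)\}$ together with $(0,0,0)$; adding $\infty$ (which $h$ and $g$ both fix) yields at least five fixed points of $hg^{-1}$. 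One should double-check that for the chosen $\kappa$ the four points $(0,\kappa^{\ell+1},\kappa^{\ell+2})$, $(\kappa,0,\kappa^{\ell+2})$, $(\kappa,\kappa^{\ell+1},\kappa^{\ell+2})$ from Lemma~\ref{SystemSz} are pairwise distinct and distinct from $\infty$ and $(0,0,0)$, which holds as long as $\kappa\ne 0$.

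The main obstacle I anticipate is the bookkeeping in the lower-bound step: pinning down the two-point stabiliser $X_{\infty,(0,0,0)}$ exactly (which requires knowing the precise structure and order of the Suzuki two-point stabiliser and verifying the diagonal action preserves the quadratic relation defining $\Omega$), and then reconciling the ``four solutions'' of Lemma~\ref{SystemSz} with the claim $|\fix(gh^{-1})|\leqslant 4$ — one must verify that among those four solutions the point $(0,0,0)$ appears (so it is not an extra fixed point) while the other three, together with $\infty$, could in principle give $4$; so the bound is $|\fix(gh^{-1})|\le 1+4=5$ unless one is more careful, and I expect the intended argument shows that one of the three nonzero solutions is forced to coincide with an excluded point or that the fixed set is $\{\infty\}\cup\{\text{three new points}\}\cup\{(0,0,0)\}$ has size exactly $4$ — tracking this coincidence correctly is the delicate point. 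Everything else is a routine adaptation of the $\PGU(3,q)$ argument.
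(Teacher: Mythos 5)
Your overall strategy is exactly the paper's: conjugate the two fixed points to $\infty$ and $(0,0,0)$, identify $X_{\infty,(0,0,0)}=Y\rtimes\langle h\rangle$ by the order count, reduce to $y_\kappa h^{-1}$, and apply Lemma~\ref{SystemSz}. The upper-bound half (take $g=y_\kappa$, get $\geqslant 5$ fixed points, hence $d\leqslant q^2-4$) is also correct and is what the paper does.

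The problem is the arithmetic target you set for the lower bound. To get $d_{q^2+1}(h,g)\geqslant q^2-4$ you only need $|\fix(gh^{-1})|\leqslant 5$, since $q^2+1-5=q^2-4$; you instead aim for $|\fix(gh^{-1})|\leqslant 4$, which would give the strictly stronger $q^2-3$ and is in fact \emph{false}. Take $g$ to be the identity: $h$ itself fixes $\infty$, $(0,0,0)$, $(0,1,1)$, $(1,0,1)$ and $(1,1,1)$, i.e.\ five points (these are precisely the four solutions of~\eqref{eq9} with $\kappa=1$ together with $\infty$). This off-by-one error is what forces you, in your final paragraph, to anticipate a ``coincidence'' among the four solutions of Lemma~\ref{SystemSz} that would cut the count down to $4$. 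No such coincidence exists: for every $\kappa\in\bbF_q^\times$ the four solutions $(0,0,0)$, $(0,\kappa^{\ell+1},\kappa^{\ell+2})$, $(\kappa,0,\kappa^{\ell+2})$, $(\kappa,\kappa^{\ell+1},\kappa^{\ell+2})$ are pairwise distinct points of $\Omega$, and $y_\kappa h^{-1}$ fixes exactly $5$ points. The correct bookkeeping is simply: at most $4$ affine fixed points plus $\infty$ gives $|\fix(gh^{-1})|\leqslant 5$, hence $d\geqslant q^2-4$; and equality of the distance follows because the system has \emph{exactly} four solutions, so $d_{q^2+1}(h,y_\kappa)=q^2+1-5=q^2-4$. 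With that correction your argument is the paper's proof.
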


\begin{proof}
Suppose $g$ is an element of $\Sz(q)$ such that $gh^{-1}$ fixes at least two points of $\Omega$, say $u$ and $v$. Let $o=(0,0,0)\in\Omega$. Since $\Sz(q)$ is $2$-transitive on $\Omega$, there exists $x\in\Sz(q)$ such that $u^x=\infty$ and $v^x=o$. Accordingly, $x^{-1}gh^{-1}x$ fixes $\infty$ and $o$.

For any $\kappa\in\bbF_q^\times$ denote $y_\kappa$ the permutation of $\Omega$ defined by
\[
\infty^{y_\kappa}=\infty\quad\text{and}\quad(a,b,c)^{y_\kappa}=(\kappa a,\kappa^{\ell+1}b,\kappa^{\ell+2}c),
\]
and let $Y=\{y_\kappa\mid\kappa\in\bbF_q^\times\}$. Then $Y$ form a subgroup of $X_{\infty o}$~\cite[Page~250]{DM1996}. Since $X$ is $2$-transitive on $\Omega$, it follows that
\[
|X_{\infty o}|=\frac{|X|}{|\Omega|(|\Omega|-1)}=\frac{|\Sz(q)|(2m+1)}{(q^2+1)q^2}=(q-1)(2m+1)=|Y\rtimes\langle h\rangle|.
\]
This implies that $X_{\infty o}=Y\rtimes\langle h\rangle$.

Now as $x^{-1}gh^{-1}x\in X_{\infty o}$, there exist an integer $i$ and $\kappa\in\bbF_q^\times$ such that
\[
x^{-1}gh^{-1}x=y_\kappa h^i.
\]
Since $X=\Sz(q)\rtimes\langle h\rangle$, we deduce that $h^i=h^{-1}$, taking both sides of the above equality modulo $\Sz(q)$. Hence $y_\kappa h^{-1}=x^{-1}gh^{-1}x$ has the same number of fixed points as $gh^{-1}$. For any fixed point $(a,b,c)$ of $y_\kappa h^{-1}$ other than $\infty$, we have
\[
c=ab+a^{\ell+2}+b^\ell
\]
since $(a,b,c)\in\Omega$, and
\[
\begin{cases}
\kappa a=a^2\\
\kappa^{\ell+1}b=b^2\\
\kappa^{\ell+2}c=c^2
\end{cases}
\]
since $\alpha^{y_\kappa}=\alpha^h$. It then follows from Lemma~\ref{SystemSz} that $|\fix(y_\kappa h^{-1})\setminus\{\infty\}|\leqslant4$, and so $|\fix(gh^{-1})|=|\fix(y_\kappa h^{-1})|\leqslant5$. This implies that $|\fix(gh^{-1})|\leqslant5$ for any $g\in\Sz(q)$, which yields
\[
d_{q^2+1}(h,\Sz(q))\geqslant(q^2+1)-5=q^2-4.
\]
Moreover, Lemma~\ref{SystemSz} implies that
\[
d_{q^2+1}(h,y_\kappa)=q^2+1-|\fix(y_\kappa h^{-1})|=q^2+1-(1+4)=q^2-4
\]
for any $\kappa\in\bbF_q^\times$. We then conclude that $d_{q^2+1}(h,\Sz(q))=q^2-4$, as the lemma asserts.
\end{proof}

\noindent\emph{Proof of Theorem~$\ref{BoundSz}$.} Since $\Sz(q)$ is a $2$-transitive subgroup of $\Sy_{q^2+1}$, we deduce from~\cite[Proposition~16]{CW2005} that $\Cr_{q^2+1}(\Sz(q))\leqslant q^2-1$. Moreover, the equality cannot be attained according to~\cite[Theorem~21]{CW2005}. Hence
\[
\Cr_{q^2+1}(\Sz(q))\leqslant q^2-2.
\]
This together with Lemma~\ref{FixSz} leads to Theorem~\ref{BoundSz}.

\section{Ree groups}\label{sec3}

Let $q=3^{2m+1}$, $\ell=3^{m+1}$, and
\[
\Omega=\{(a,b,c,\lambda_1(a,b,c),\lambda_2(a,b,c),\lambda_3(a,b,c))\mid(a,b,c)\in\bbF_q^3\}\cup\{\infty\},
\]
where
\begin{align*}
\lambda_1(a,b,c)&=a^2b-ac+b^\ell-a^{\ell+3},\\
\lambda_2(a,b,c)&=a^\ell b^\ell-c^\ell+ab^2+bc-a^{2\ell+3},\\
\lambda_3(a,b,c)&=ac^\ell-a^{\ell+1}b^\ell+a^{\ell+3}b+a^2b^2-b^{\ell+1}-c^2+a^{2\ell+4}.
\end{align*}
Clearly, $|\Omega|=q^3+1$. Let $\infty^h=\infty$ and
\begin{align}\label{eq10}
&(a,b,c,\lambda_1(a,b,c),\lambda_2(a,b,c),\lambda_3(a,b,c))^h\\
=&(a^3,b^3,c^3,\lambda_1(a^3,b^3,c^3),\lambda_2(a^3,b^3,c^3),\lambda_3(a^3,b^3,c^3))\nonumber
\end{align}
for any $(a,b,c)\in\bbF_q^3$. Then $h$ is a permutation of $\Omega$, and $X:=\Ree(q)\rtimes\langle h\rangle$ is a permutation group on $\Omega$.  We shall prove that $d_{q^3+1}(h,\Ree(q))\geqslant q^3-27$, which gives a lower bound for the covering radius of $\Ree(q)$.

\begin{lemma}\label{FixRee}
$d_{q^3+1}(h,\Ree(q))=q^3-27$, where $h$ is defined as in~\eqref{eq10}.
\end{lemma}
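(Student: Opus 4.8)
The plan is to follow the same two-part strategy used for $\PSU(3,q)$ and $\Sz(q)$: first establish the lower bound $d_{q^3+1}(h,\Ree(q))\geqslant q^3-27$ by showing that $|\fix(gh^{-1})|\leqslant 28$ for every $g\in\Ree(q)$, and then exhibit a specific $g\in\Ree(q)$ (or $g=1$) realising this bound so that the inequality is an equality. For the lower bound, suppose $gh^{-1}$ fixes at least two points $u,v\in\Omega$; since $\Ree(q)$ is $2$-transitive on $\Omega$, conjugate by some $x\in\Ree(q)$ so that the two fixed points become $\infty$ and $o=(0,0,0,0,0,0)$. The conjugate $x^{-1}gh^{-1}x$ then lies in $X_{\infty o}$, where $X=\Ree(q)\rtimes\langle h\rangle$.

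The next step is to identify $X_{\infty o}$ explicitly. The stabiliser $\Ree(q)_{\infty o}$ is the well-known cyclic group of order $q-1$ acting diagonally by $(a,b,c)\mapsto(\kappa a,\kappa^{\ell+1}b,\kappa^{\ell+2}c)$ for $\kappa\in\bbF_q^\times$ (with the $\lambda_i$ transforming accordingly); call this group $Y=\{y_\kappa\mid\kappa\in\bbF_q^\times\}$. A counting argument as in Lemma~\ref{FixSz} gives
\[
|X_{\infty o}|=\frac{|X|}{|\Omega|(|\Omega|-1)}=\frac{|\Ree(q)|(2m+1)}{(q^3+1)q^3}=(q-1)(2m+1)=|Y\rtimes\langle h\rangle|,
\]
so $X_{\infty o}=Y\rtimes\langle h\rangle$. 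Writing $x^{-1}gh^{-1}x=y_\kappa h^i$ and reducing modulo $\Ree(q)$ forces $h^i=h^{-1}$, so $gh^{-1}$ has the same number of fixed points as $y_\kappa h^{-1}$ for a single $\kappa\in\bbF_q^\times$. It therefore suffices to bound $|\fix(y_\kappa h^{-1})\setminus\{\infty\}|$ by $27$.

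A point $(a,b,c,\lambda_1,\lambda_2,\lambda_3)\in\Omega$ other than $\infty$ is fixed by $y_\kappa h^{-1}$ precisely when $\kappa a=a^3$, $\kappa^{\ell+1}b=b^3$, $\kappa^{\ell+2}c=c^3$ (the conditions on the $\lambda_i$ being then automatic, since they are determined by $(a,b,c)$). The first equation gives $a\in\{0\}\cup\{a:a^2=\kappa\}$, i.e. at most $3$ values of $a$ (namely $a=0$ and the two square roots of $\kappa$, when they exist in $\bbF_q$ — but over $\bbF_q$ with $q$ odd there is at most one pair, so really $a\in\{0,\pm\kappa^{1/2}\}$ giving at most $3$ choices); similarly at most $3$ choices for $b$ and at most $3$ for $c$. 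Hence there are at most $3\cdot3\cdot3=27$ fixed points other than $\infty$, so $|\fix(y_\kappa h^{-1})|\leqslant 28$, giving $d_{q^3+1}(h,\Ree(q))\geqslant q^3-27$ for all $g\in\Ree(q)$. The main obstacle here is pinning down the exact form of the action of $Y$ on the coordinates $\lambda_i$ and verifying that no further constraints among $a,b,c$ are imposed — one must check that all $27$ candidate triples actually lie in $\Omega$ and give genuine fixed points; this is the analogue of the eight-candidate check in Lemma~\ref{SystemSz}, and I expect (as with Suzuki) that every candidate works, but the polynomial identities for $\lambda_1,\lambda_2,\lambda_3$ are considerably bulkier and will need care.

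Finally, for the upper bound I would take $g=y_\kappa$ for any $\kappa\in\bbF_q^\times$ (or examine $g=1$): by the computation above $|\fix(y_\kappa h^{-1})|=1+27=28$, whence
\[
d_{q^3+1}(h,y_\kappa)=(q^3+1)-28=q^3-27.
\]
Combining the two bounds yields $d_{q^3+1}(h,\Ree(q))=q^3-27$, completing the proof; Theorem~\ref{BoundRee} then follows by invoking \cite[Proposition~16]{CW2005} for the upper bound $\Cr_{q^3+1}(\Ree(q))\leqslant q^3-1$ together with $\Cr_{q^3+1}(\Ree(q))\geqslant d_{q^3+1}(h,\Ree(q))=q^3-27$.
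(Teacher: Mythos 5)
Your proposal is correct and follows essentially the same route as the paper: conjugate the two fixed points to $\infty$ and $o$, identify $X_{\infty o}=Y\rtimes\langle h\rangle$ by the order count, and reduce to counting solutions of $\kappa a=a^3$, $\kappa^{\ell+1}b=b^3$, $\kappa^{\ell+2}c=c^3$ (and, as you suspected, no further constraint arises in the Ree case since every $(a,b,c)\in\bbF_q^3$ parametrises a point of $\Omega$ with the $\lambda_i$ determined by it). One small correction: $|\fix(y_\kappa h^{-1})|=28$ does \emph{not} hold for every $\kappa$ --- when $\kappa$ is a nonsquare in $\bbF_q^\times$ the equations for $a$ and $c$ admit only the zero solution, giving just $4$ fixed points --- but your fallback $g=1$ (i.e.\ $\kappa=1$) does yield exactly $27$ solutions and is precisely the witness the paper uses.
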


\begin{proof}
Suppose $g$ is an element of $\Ree(q)$ such that $gh^{-1}$ fixes at least two points of $\Omega$, say $u$ and $v$. Let $o=(0,0,0)\in\Omega$. Since $\Ree(q)$ is $2$-transitive on $\Omega$, there exists $x\in\Ree(q)$ such that $u^x=\infty$ and $v^x=o$. Accordingly, $x^{-1}gh^{-1}x$ fixes $\infty$ and $o$.

For any $\kappa\in\bbF_q^\times$ denote by $y_\kappa$ the permutation of $\Omega$ defined by $\infty^{y_\kappa}=\infty$ and
\begin{align*}
&(a,b,c,\lambda_1(a,b,c),\lambda_2(a,b,c),\lambda_3(a,b,c))^{y_\kappa}\\
=&(\kappa a,\kappa^{\ell+1}b,\kappa^{\ell+2}c,\lambda_1(\kappa a,\kappa^{\ell+1}b,\kappa^{\ell+2}c),\lambda_2(\kappa a,\kappa^{\ell+1}b,\kappa^{\ell+2}c),\lambda_3(\kappa a,\kappa^{\ell+1}b,\kappa^{\ell+2}c)),
\end{align*}
and let $Y=\{y_\kappa\mid\kappa\in\bbF_q^\times\}$. Then $Y$ forms a subgroup of $X_{\infty o}$~\cite[Page~251]{DM1996}. Since $X$ is $2$-transitive on $\Omega$, it follows that
\[
|X_{\infty o}|=\frac{|X|}{|\Omega|(|\Omega|-1)}=\frac{|\Ree(q)|(2m+1)}{(q^3+1)q^3}=(q-1)(2m+1)=|Y\rtimes\langle h\rangle|.
\]
This implies that $X_{\infty o}=Y\rtimes\langle h\rangle$.

Now as $x^{-1}gh^{-1}x\in X_{\infty o}$, there exist an integer $i$ and $\kappa\in\bbF_q^\times$ such that
\[
x^{-1}gh^{-1}x=y_\kappa h^i.
\]
Since $X=\Ree(q)\rtimes\langle h\rangle$, we deduce that $h^i=h^{-1}$, taking both sides of the above equality modulo $\Ree(q)$. Hence $y_\kappa h^{-1}=x^{-1}gh^{-1}x$ has the same number of fixed points as $gh^{-1}$. For any fixed point $(a,b,c,\lambda_1(a,b,c),\lambda_2(a,b,c),\lambda_3(a,b,c))$ of $y_\kappa h^{-1}$ other than $\infty$, we have
\begin{equation}\label{eq8}
\begin{cases}
\kappa a=a^3\\
\kappa^{\ell+1}b=b^3\\
\kappa^{\ell+2}c=c^3
\end{cases}
\end{equation}
since $\alpha^{y_\kappa}=\alpha^h$. As each line of~\eqref{eq8} has most three solutions, \eqref{eq8} has at most $27$ solutions $(a,b,c)\in\bbF_q^3$. It follows that $|\fix(y_\kappa h^{-1})\setminus\{\infty\}|\leqslant27$, and so $|\fix(gh^{-1})|=|\fix(y_\kappa h^{-1})|\leqslant28$. This implies that $|\fix(gh^{-1})|\leqslant28$ for any $g\in\Ree(q)$, which yields
\[
d_{q^3+1}(h,\Ree(q))\geqslant(q^3+1)-28=q^3-27.
\]
Moreover, \eqref{eq8} has exactly $27$ solutions $(a,b,c)\in\bbF_q^3$ when $\kappa=1$. Hence $y_1h^{-1}$ fixes exactly $28$ points. This implies that
\[
\Cr_{q^3+1}(h,y_1)=(q^3+1)-28=q^3-27.
\]
We then conclude that $d_{q^3+1}(h,\Ree(q))=q^3-27$, as the lemma asserts.
\end{proof}

\noindent\emph{Proof of Theorem~$\ref{BoundRee}$.} Since $\Ree(q)$ is a $2$-transitive subgroup of $\Sy_{q^3+1}$, we deduce from~\cite[Proposition~16]{CW2005} that \[
\Cr_{q^3+1}(\Ree(q))\leqslant q^3-1.
\]
This together with Lemma~\ref{FixRee} leads to Theorem~\ref{BoundRee}.

\section{Geometric interpretations for the covering radii of $\PGL(2,q)$ and $\PGU(3,q)$}\label{sec4}

Let us first describe in detail how the covering radius problem for $\PGL(2,q)$ relates to certain configurations
in an associated \emph{Minkowski plane} that was alluded to by Cameron and Wanless~\cite{CW2005}.
Consider the projective line $\PG(1,q)$ for which $\PGL(2,q)$ has its
natural sharply $3$-transitive action of degree $q+1$.
Consider the Segre variety $\mathcal{S}_{m,n}(q)$ lying in $\PG((m+1)(n+1)-1,q)$ obtained
by taking the simple tensors of pairs of homogeneous coordinates of points
of $\PG(m,q)$ and $\PG(n,q)$. So for example, $\mathcal{S}_{1,1}(q)$ is the \emph{hyperbolic quadric}
$\mathsf{Q}^+(3,q)$ in projective $3$-space, and $\mathcal{S}_{1,2}(q)$ is the \emph{Segre threefold}
in $\PG(5,q)$. We refer to \cite[Chapter 4]{HT2016} for more on finite Segre varieties and their properties.
Consider $\mathcal{S}_{1,1}(q)$. The \emph{Segre map} in this instance is given by:
\begin{align*}
\PG(1,q)\times\PG(1,q)&\to \mathcal{S}_{1,1}\\
\left((X_1,X_2),(X_1',X_2')\right)&\mapsto (X_1,X_2)\otimes (X_1',X_2')=\begin{bmatrix} X_1X_1' & X_1X_2'\\ X_2X_1'& X_2X_2' \end{bmatrix}.
\end{align*}
We can identify the output with a row vector $(X_1X_1',X_1X_2',X_2X_1',X_2X_2')$;
the homogeneous coordinates for a point of $\PG(3,q)$.
Moreover, each element of $\mathcal{S}_{1,1,}$ is a zero of the following quadratic form on $\F_q^4$:
\[
Q(X)=X_1X_4-X_2X_3, \quad\text{for }X = (X_1,X_2,X_3,X_4)\in \F_q^4.
\]
The quadratic form $Q$ gives rise to a hyperbolic quadric $\mathsf{Q}^+(3,q)$ whose points are precisely those of $\mathcal{S}_{1,1}$.
Consider a permutation $\pi$ in $\Sym(\PG(1,q))$. Then we can identify
$\pi$ with a set of ordered pairs in $\PG(1,q)\times\PG(1,q)$:
\[
\pi\leftrightarrow \{ (X, X^\pi) \colon X \in \PG(1,q)\}.
\]
We can then apply the Segre map above to obtain
\[
\pi \rightarrow \graph(\pi):=\{ X \otimes X^\pi \colon X\in \PG(1,q)\}.
\]
Geometrically, $\graph(\pi)$ is an \emph{ovoid} of $\mathcal{S}_{1,1}$ because two
points $X\otimes X^\pi$ and $Y\otimes Y^\pi$ are orthogonal under the associated bilinear
form\footnote{In terms of tensors, the bilinear form can be written as $B(u_1\otimes u_2, v_1\otimes v_2)=
\beta(u_1,v_1)\beta(u_2,v_2)$, where $\beta$ is defined by $\beta(x,y)=x_1y_2-x_2y_1$, and then extend $B$ linearly.
So $0=B(X\otimes X^\pi, Y\otimes Y^\pi)=\beta(X,Y)\beta(X^\pi,Y^\pi)$ implies $X=Y$ or $X^\pi=Y^\pi$,
which are equivalent as $\pi$ is a bijection.}
if and only if $X=Y$. In other words, $\graph(\pi)$ is a set of $q+1$ points, no two orthogonal in
$\mathcal{S}_{1,1}$.
Now we can easily distinguish the permutations that lie in $\PGL(2,q)$. Recall that elements of $\PGL(2,q)$
are M\"obius transformations of the form
\[
\mu_{a,b,c,d}:=t\mapsto \frac{a t+b}{ct +d},\quad a,b,c,d\in\F_q, ad-bc\ne 0.
\]
So
\begin{align*}
\graph(\mu_{a,b,c,d})&=\{ (ct+d,at+b,ct^2+dt,at^2+bt) \colon t\in \F_q\cup \{\infty\} \},
\end{align*}
which is a conic section\footnote{In fact, the (hyper)plane of intersection has dual homogeneous coodinates
$[b, -d, a, -c]$ and it is not difficult to calculate that this plane is non-degenerate with respect to the form $Q$.
} of $\mathsf{Q}^+(3,q)$.
The converse is also true.
So in the bijection above, we have that $\PGL(2,q)$ is distinguished amongst all permutations
by taking conics amongst all ovoids of $\mathcal{S}_{1,1}$. Therefore, we have
\[
\Cr_{q+1}(\PGL(2,q))=\max_{ \mathcal{O}\in\text{ovoids of }\mathsf{Q}^+(3,q)}\left(
\min_ {\mathcal{C}\in\text{conics of }\mathsf{Q}^+(3,q)} |\mathcal{O}\cap \mathcal{C}|\right).
\]
Alternatively, we could use the language of \emph{Benz planes}. The \emph{classical Minkowski plane}
 $\mathcal{M}(q)$ consists of three types of objects: \emph{points}, \emph{lines}, and \emph{circles}. The points
 of $\mathcal{M}(q)$ are just the points of $\mathsf{Q}^+(3,q)$, the lines are the generators of $\mathsf{Q}^+(3,q)$,
 and the circles are the non-tangent plane sections (i.e., conics) of $\mathsf{Q}^+(3,q)$. Incidence is inherited
 from the ambient projective space. So to compute $\Cr_{q+1}(\PGL(2,q))$, we need to consider sets of points
 of $\mathcal{M}(q)$ that meet every line once and every circle in at most $s$ points, and we seek to minimise $s$.

We will consider a special set of points lying in the Segre variety $\mathcal{S}_{2,2}(q^2)\subseteq \PG(8,q^2)$.
Suppose we have an Hermitian form $\beta$ from $\F_{q^2}^3$ to $\F_{q^2}$. Define
$\beta\otimes \beta$ to be the form on $\F_{q^2}^9$ defined by
\[
(\beta\otimes \beta)(u_1\otimes v_1,u_2\otimes v_2) := \beta(u_1,u_2)\beta(v_1,v_2)
\]
and extend linearly. This form is Hermitian. The set of totally isotropic points of $\beta\otimes\beta$ is a Hermitian variety which we will denote
by $\herm(8,q^2)$.

Now consider the totally isotropic points of $\beta$; an \emph{Hermitian curve} $\herm(2,q^2)$ of $q^3+1$ points
of $\PG(2,q^2)$. This variety gives us the natural $2$-transitive action of $\PGU(3,q)$.
This time, we apply the Segre map to pairs of elements of the Hermitian curve:
\begin{align*}
\herm(2,q^2)\times\herm(2,q^2)&\to \mathcal{S}_{2,2}\\
\left(u,v\right)&\mapsto u\otimes v.
\end{align*}
This map is injective, but not surjective. There is no natural name for its image, but we will call it $\mathcal{U}_{2,2}$.

\begin{lemma}
Let $(P_1,P_2)\in \herm(2,q^2)\times\herm(2,q^2)$. Then the set of points orthogonal with or equal to
$P_1\otimes P_2$ (in $\mathcal{U}_{2,2}$) is
\[
\{ P_1\otimes X \colon X\in \herm(2,q^2) \} \cup \{ X\otimes P_2 \colon X\in \herm(2,q^2) \}.
\]
\end{lemma}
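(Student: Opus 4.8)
The plan is to compute the orthogonality condition directly using the tensor description of the form $\beta\otimes\beta$. Write $P_1=\langle u_1\rangle$ and $P_2=\langle v_1\rangle$ with $u_1,v_1$ totally isotropic for $\beta$, and let $\langle w\rangle$ be any point of $\mathcal{U}_{2,2}$, so $w=u_2\otimes v_2$ with $u_2,v_2$ totally isotropic for $\beta$. By the defining formula, $(\beta\otimes\beta)(u_1\otimes v_1,\,u_2\otimes v_2)=\beta(u_1,u_2)\beta(v_1,v_2)$. Since $\bbF_{q^2}$ is a field (so has no zero divisors), this product vanishes if and only if $\beta(u_1,u_2)=0$ or $\beta(v_1,v_2)=0$.

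Next I would translate each of these two alternatives back into a geometric statement about $\herm(2,q^2)$. The condition $\beta(u_1,u_2)=0$ says that the totally isotropic point $\langle u_2\rangle$ lies in the tangent hyperplane to $\herm(2,q^2)$ at $\langle u_1\rangle=P_1$; intersecting this hyperplane with the Hermitian curve, the totally isotropic points $\langle u_2\rangle$ satisfying it are exactly the points of $\herm(2,q^2)$ collinear with $P_1$ on a tangent line, together with $P_1$ itself. In the rank-one (curve) setting this is just the set of all $X\in\herm(2,q^2)$ with $\beta(P_1,X)=0$, which includes $X=P_1$. Hence the points $u_2\otimes v_2$ with $\beta(u_1,u_2)=0$ are precisely $\{P_1\otimes X\mid X\in\herm(2,q^2)\}$, and symmetrically the points with $\beta(v_1,v_2)=0$ are precisely $\{X\otimes P_2\mid X\in\herm(2,q^2)\}$. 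Taking the union of the two alternatives gives exactly the claimed set, and I should note that "orthogonal with or equal to $P_1\otimes P_2$" is automatically covered because $P_1\otimes P_2$ itself sits in both families (take $X=P_2$ in the first, $X=P_1$ in the second), and indeed $P_1\otimes P_2$ is isotropic since $\beta(u_1,u_1)=\beta(v_1,v_1)=0$.

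One subtlety to handle carefully is that a point of $\mathcal{U}_{2,2}$ of the form $u_2\otimes v_2$ does not determine the pair $(\langle u_2\rangle,\langle v_2\rangle)$ ambiguously in general — but since the Segre map restricted to $\herm(2,q^2)\times\herm(2,q^2)$ is injective (as stated in the excerpt), the decomposition $w=u_2\otimes v_2$ is essentially unique up to scalars, so the two families on the right-hand side are well-defined and the case analysis above is exhaustive. I expect the main obstacle to be nothing deep, but rather making sure the "tangent hyperplane" bookkeeping is stated cleanly: one must verify that $\{X\in\herm(2,q^2)\mid\beta(P_1,X)=0\}$ is exactly the set of $X$ for which $\langle u_1\otimes v_1\rangle$ and $\langle u_1\otimes v_2\rangle$ are orthogonal, and that no point of $\mathcal{U}_{2,2}$ orthogonal to $P_1\otimes P_2$ can fail to lie in one of the two families — which is immediate from the no-zero-divisors observation, so the argument is short.
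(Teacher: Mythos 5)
Your overall strategy is the same as the paper's: evaluate $(\beta\otimes\beta)(u_2\otimes v_2,\,u_1\otimes v_1)=\beta(u_2,u_1)\beta(v_2,v_1)$ and use that $\bbF_{q^2}$ has no zero divisors to split into the two alternatives $\beta(u_2,u_1)=0$ or $\beta(v_2,v_1)=0$. However, there is a genuine gap at the next step. From $\beta(u_1,u_2)=0$ you conclude that the corresponding points are ``precisely $\{P_1\otimes X \colon X\in\herm(2,q^2)\}$,'' but that conclusion requires knowing that
\[
\{X\in\herm(2,q^2)\mid \beta(P_1,X)=0\}=\{P_1\},
\]
i.e.\ that no two \emph{distinct} points of the Hermitian curve are orthogonal. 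You never state or justify this; in fact your description of the set as ``the points of $\herm(2,q^2)$ collinear with $P_1$ on a tangent line, together with $P_1$ itself,'' and the remark that the set ``includes $X=P_1$,'' leave open the possibility that it is strictly larger than $\{P_1\}$ --- and if it were, the set of points orthogonal to $P_1\otimes P_2$ would be the strictly larger set $\{X\otimes Y\colon X\in P_1^{\perp}\cap\herm,\ Y\in\herm\}\cup\{X\otimes Y\colon X\in\herm,\ Y\in P_2^{\perp}\cap\herm\}$ and the lemma would fail. So the ``Hence'' is a non sequitur as written. (For comparison: on the hyperbolic quadric used in the $\PGL(2,q)$ discussion, distinct points on a common generator \emph{are} orthogonal, so this is not a vacuous worry.)

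The missing fact is exactly the one sentence the paper's proof leans on: ``no two distinct points of the Hermitian curve are orthogonal.'' It is true, and easy to supply: if $P\neq Q$ are isotropic points of the nondegenerate Hermitian form $\beta$ on $\bbF_{q^2}^3$ with $\beta(P,Q)=0$, then the line $\langle P,Q\rangle$ is totally isotropic, contradicting the fact that the Witt index of such a form is $1$ (equivalently, the tangent line $P^{\perp}$ meets $\herm(2,q^2)$ only at $P$). Adding this observation closes the gap, after which your argument coincides with the paper's. Your remarks about injectivity of the restricted Segre map and about $P_1\otimes P_2$ lying in both families are fine but peripheral.
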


\begin{proof}
Suppose $X\otimes Y$ is orthogonal to $P_1\otimes P_2$ with respect to $\beta\otimes \beta$.
Then
\begin{align*}
\beta(X,P_1)\beta(Y,P_2)&=(\beta\otimes \beta)(X\otimes Y, P_1\otimes P_2)=0.
\end{align*}
However, no two distinct points of the Hermitian curve are orthogonal, and so we must have
either $X=P_1$ or $Y=P_2$.
\end{proof}

Now consider a permutation $f\in \Sym(\herm(2,q^2))$. Let $\graph(f)$ be
\[
\{ X\otimes X^f \colon X \in \herm(2,q^2) \}
\]
as a subset of $\mathcal{U}_{2,2}$. Then:

\begin{proposition}
For each $f\in \Sym(\herm(2,q^2))$, the set $\graph(f)$ is a set of $q^3+1$ totally isotropic points of $\herm(8,q^2)$,
no two distinct elements orthogonal in $\herm(8,q^2)$.
\end{proposition}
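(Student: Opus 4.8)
The plan is to verify the three assertions in turn --- that every element of $\graph(f)$ is a totally isotropic point of $\herm(8,q^2)$, that $|\graph(f)|=q^3+1$, and that distinct elements of $\graph(f)$ are pairwise non-orthogonal in $\herm(8,q^2)$ --- each by a short tensor computation using the multiplicativity $(\beta\otimes\beta)(u_1\otimes v_1,u_2\otimes v_2)=\beta(u_1,u_2)\beta(v_1,v_2)$, together with two properties of the Hermitian curve: every $X\in\herm(2,q^2)$ satisfies $\beta(X,X)=0$, and no two distinct points of $\herm(2,q^2)$ are orthogonal (the latter being the fact already invoked in the proof of the preceding lemma, which follows from the tangent line to $\herm(2,q^2)$ at a point of the curve meeting the curve only in that point).

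For the first assertion, take $X\in\herm(2,q^2)$ and note that $(\beta\otimes\beta)(X\otimes X^f,X\otimes X^f)=\beta(X,X)\,\beta(X^f,X^f)=0$, since both $X$ and $X^f$ lie on the Hermitian curve; hence $X\otimes X^f\in\herm(8,q^2)$. For the second, recall from above that the Segre map $(u,v)\mapsto u\otimes v$ is injective; since $f$ is a bijection, the assignment $X\mapsto(X,X^f)$ is injective, so $X\mapsto X\otimes X^f$ is injective on $\herm(2,q^2)$ and therefore $|\graph(f)|=|\herm(2,q^2)|=q^3+1$. For the third, let $X\otimes X^f$ and $Y\otimes Y^f$ be distinct elements of $\graph(f)$, so that $X\neq Y$; then $(\beta\otimes\beta)(X\otimes X^f,Y\otimes Y^f)=\beta(X,Y)\,\beta(X^f,Y^f)$, and since $X\neq Y$ are distinct points of the Hermitian curve we have $\beta(X,Y)\neq0$, while $f$ being a bijection gives $X^f\neq Y^f$ and hence $\beta(X^f,Y^f)\neq0$; thus the product is nonzero and the two points are not orthogonal.

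I do not expect a genuine obstacle here: the argument is essentially forced by the preceding lemma and its proof. The only points meriting care are that being ``totally isotropic'' and being ``orthogonal'' are properties of the one-dimensional subspaces $X\otimes X^f$ rather than of particular representative vectors --- which is legitimate because $\beta\otimes\beta$ is Hermitian and both conditions are scalar-invariant --- and that the non-orthogonality of distinct points of $\herm(2,q^2)$, used twice above, is invoked rather than reproved.
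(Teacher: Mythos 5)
Your proof is correct and takes essentially the same approach as the paper: the key step in both is that $(\beta\otimes\beta)(X\otimes X^f,Y\otimes Y^f)=\beta(X,Y)\beta(X^f,Y^f)$ cannot vanish for $X\neq Y$ because no two distinct points of the Hermitian curve are orthogonal. The paper's proof records only this non-orthogonality step, leaving the isotropy and cardinality claims implicit, so your version is simply a more complete write-up of the same argument.
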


\begin{proof}
Suppose two elements $P\otimes P^f$ and $Q\otimes Q^f$ are orthogonal with respect to the form $\beta\otimes \beta$.
Then
\[
\beta(P,Q)\beta(P^f,Q^f)=(\beta\otimes \beta)(P\otimes P^f, Q\otimes Q^f)=0.
\]
and hence $P=Q$.
\end{proof}

The converse is also true.

\begin{proposition}
Let $\{ P_i \otimes Q_i \colon i=0,\ldots, q^3\}$ be a set of points of $\mathcal{U}_{2,2}$, no two distinct elements orthogonal.
Then the map $P_i \to Q_i$ is a bijection.
\end{proposition}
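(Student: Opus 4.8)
The plan is to show that both coordinate maps $i\mapsto P_i$ and $i\mapsto Q_i$ are injective, and then close the argument by counting. The index set $\{0,1,\ldots,q^3\}$ has $q^3+1$ elements, and so does the Hermitian curve $\herm(2,q^2)$ (as recorded earlier in this section), so each injection is automatically a bijection onto $\herm(2,q^2)$; the map $P_i\mapsto Q_i$ is then the composite of the bijection $P_i\mapsto i$ with the bijection $i\mapsto Q_i$, hence a bijection.

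First I would note that, since $\{P_i\otimes Q_i\colon i=0,\ldots,q^3\}$ is a set indexed by $q^3+1$ values, the points $P_i\otimes Q_i$ are pairwise distinct. Now suppose $P_i=P_j$ for some $i\neq j$. Then $P_j\otimes Q_j$ is of the form $P_i\otimes X$ with $X=Q_j$, so by the Lemma it lies in the set of points orthogonal with or equal to $P_i\otimes Q_i$. Since $P_i\otimes Q_i\neq P_j\otimes Q_j$, these two points are not equal, hence they are orthogonal, contradicting the hypothesis. Therefore $i\mapsto P_i$ is injective. Running the same argument with the two tensor factors interchanged, using instead the description $\{X\otimes P_2\colon X\in\herm(2,q^2)\}$ in the Lemma, shows that $i\mapsto Q_i$ is injective.

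By the counting remark, $i\mapsto P_i$ and $i\mapsto Q_i$ are both bijections from $\{0,\ldots,q^3\}$ onto $\herm(2,q^2)$. In particular the assignment $P_i\mapsto Q_i$ is well defined (if $P_i=P_j$ then $i=j$, so $Q_i=Q_j$) and coincides with the composition of the inverse of $i\mapsto P_i$ with $i\mapsto Q_i$; hence it is a bijection of $\herm(2,q^2)$. Writing $f$ for this permutation, we get $\graph(f)=\{P_i\otimes Q_i\}$, which is the converse asserted after the previous proposition.

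I do not expect a serious obstacle: all the geometric content is already packaged in the Lemma. The only step needing any care is the passage from injectivity to bijectivity, where one must explicitly invoke $|\herm(2,q^2)|=q^3+1$ and the equal cardinality of the index set so that the images $\{P_i\}$ and $\{Q_i\}$ exhaust $\herm(2,q^2)$; without this, the statement would fail for a proper subset of the curve.
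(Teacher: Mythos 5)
Your proof is correct and follows essentially the same route as the paper: the paper's (very terse) proof just computes $(\beta\otimes\beta)(P_i\otimes Q_i,P_j\otimes Q_j)=\beta(P_i,P_j)\beta(Q_i,Q_j)=0$ when a coordinate repeats, using isotropy of points on the Hermitian curve, which is exactly the computation packaged in the Lemma you invoke. Your write-up is in fact more complete than the paper's, since you check injectivity in both tensor factors and make explicit the counting step $|\herm(2,q^2)|=q^3+1$ needed to pass from injectivity to bijectivity.
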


\begin{proof}
Suppose $Q_i=Q_j$ for some $i,j \in \{0,\ldots, q^3\}$. Then
\[
(\beta\otimes\beta)( P_i\otimes Q_i, P_j\otimes Q_j)=\beta(P_i,P_j) \beta(Q_i,Q_j)=0
\]
and hence $i=j$. Therefore, the map $P_i\to Q_i$ is a bijection.
\end{proof}

So we define an \emph{ovoid} of $\mathcal{U}_{2,2}$ to be a set of $q^3+1$ elements, that are pairwise non-orthogonal.
For example, the Frobenius automorphism $h$ that we used in the proof of Theorem~\ref{BoundPSU}
gives rise to the ovoid $\{ P\otimes P^h\colon P\in\herm(2,q^2) \}$ of $\mathcal{U}_{2,2}$.

\begin{proposition}
For each $f\in \PGU(3,q)$, the set $\graph(f)$ spans a projective $5$-space of $\PG(8,q^2)$, and its perp with respect to $\herm(8,q^2)$
is a plane $\pi_f$ which does not intersect $\mathcal{S}_{2,2}$. Moreover, $\pi_f$ is totally isotropic when $q$ is even, and non-degenerate
when $q$ is odd.
\end{proposition}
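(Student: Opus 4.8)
The plan is to work coordinate-free as far as possible, using the Segre structure of $\mathcal{U}_{2,2}$ together with the fact that for $f\in\PGU(3,q)$ the permutation $f$ is induced by a semilinear (indeed linear, after choice of scalars) map preserving the Hermitian form $\beta$. First I would observe that $\graph(f)=\{X\otimes X^f : X\in\herm(2,q^2)\}$ lies in the image of the map $v\mapsto v\otimes v^{A}$ where $A\in\GU(3,q)$ represents $f$; since $v\mapsto v^A$ is linear, the vectors $v\otimes v^A$ are the ``diagonal'' simple tensors of a linear map, and their span inside $\F_{q^2}^3\otimes\F_{q^2}^3\cong\F_{q^2}^9$ is the image of the symmetric-type subspace: writing $v=\sum x_i e_i$, the tensor $v\otimes v^A$ is a quadratic expression in the $x_i$, so the span is contained in the $6$-dimensional space spanned by $\{e_i\otimes (e_j)^A + e_j\otimes(e_i)^A\}_{i\le j}$ (when $q$ is odd) or the analogous span (when $q$ is even, where one must be more careful and instead argue that the $q^3+1$ points $X\otimes X^f$ of the Hermitian curve already span a $6$-dimensional space because the Hermitian curve is not contained in any conic/hyperplane section of the Veronese-type variety). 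I would prove the span is exactly $6$-dimensional by exhibiting $6$ points of $\graph(f)$ in general position, or by transporting the known statement for $f=\mathrm{id}$ (the ordinary Hermitian Veronesean $\herm(2,q^2)\to\PG(8,q^2)$, $v\mapsto v\otimes v$, whose image spans a $5$-space — this is classical, see e.g. the Hermitian Veronesean literature) along the linear isomorphism $\mathrm{id}\otimes A$ of $\PG(8,q^2)$, which sends $\graph(\mathrm{id})$ to $\graph(f)$ and preserves dimension.

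Next I would compute the perp. Since $\herm(8,q^2)$ comes from the nondegenerate Hermitian form $\beta\otimes\beta$ on a $9$-dimensional space, the perp of a $6$-dimensional subspace $W=\langle\graph(f)\rangle$ is a $3$-dimensional subspace $W^\perp$, i.e. $\pi_f:=\PG(W^\perp)$ is a plane; this is forced purely by dimension count once $\dim W=6$ is established. The claim that $\pi_f\cap\mathcal{S}_{2,2}=\emptyset$ I would prove by contradiction: a point of $\mathcal{S}_{2,2}$ in $W^\perp$ has the form $u\otimes w$ and is orthogonal to every $X\otimes X^f$, so $\beta(u,X)\beta(w,X^f)=0$ for all $X\in\herm(2,q^2)$; since for fixed $u$ the condition $\beta(u,X)=0$ defines at most the tangent line to the Hermitian curve (at most $q+1$ points, or if $u$ is itself isotropic exactly the $q+1$ points collinear with $\langle u\rangle$), and similarly for $w$, and $q^3+1 > 2(q+1)$, we get a contradiction — there must be some $X$ with $\beta(u,X)\ne0$ and $\beta(w,X^f)\ne 0$. (This is essentially the counting already used in the Lemma immediately preceding the Proposition, which I would cite.)

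Finally, the parity statement about $\pi_f$ being totally isotropic versus nondegenerate. Here I would restrict the form $\beta\otimes\beta$ to $W$ first: the point is that $\beta\otimes\beta$ restricted to $W=\langle\graph(f)\rangle$ is, after the linear change $\mathrm{id}\otimes A$, the restriction of $\beta\otimes\beta$ to the span of the Hermitian Veronesean $\{X\otimes X\}$, and on that $6$-space the induced form is (up to scalar) the form governing the Hermitian Veronesean variety, whose radical is known to behave differently in even and odd characteristic — in odd characteristic the Veronesean span carries a nondegenerate form, while in even characteristic the ``symmetric tensors'' pick up a totally isotropic radical (the alternating part degenerates). Concretely I would diagonalise: pick the standard unitary basis so $\beta$ is $x_1y_3^q+x_2y_2^q+x_3y_1^q$ as in~\eqref{eq5}, write out a basis of $W^\perp$ explicitly as $9$-tuples, and evaluate $\beta\otimes\beta$ on it; the Gram matrix of $\pi_f$ will be zero in characteristic $2$ and invertible for odd $q$, and I would present just the $3\times 3$ Gram matrix rather than all the intermediate algebra. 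Since $W^\perp$ is the perp of $W$ inside a nondegenerate space, $\mathrm{rad}(\beta\otimes\beta|_{W^\perp})=W^\perp\cap W^{\perp\perp}=W^\perp\cap W$, so ``$\pi_f$ totally isotropic'' is equivalent to $W^\perp\subseteq W$, and ``$\pi_f$ nondegenerate'' to $W^\perp\cap W=0$; thus the whole parity dichotomy reduces to deciding whether the $6$-space $W$ contains its own $3$-dimensional perp, which in turn reduces to the rank of the Gram matrix of $\beta\otimes\beta$ restricted to $W$ (rank $3$ versus rank $6$). I expect the main obstacle to be the even-characteristic bookkeeping: the Hermitian-Veronesean span is spanned by both the ``squares'' $e_i\otimes(e_i)^A$ and the ``mixed'' symmetric sums, and in characteristic $2$ one must track carefully that the squares contribute a totally singular complement and that the perp lands inside $W$ — a direct explicit basis computation will settle it, but it is the step where sign/squaring subtleties bite.
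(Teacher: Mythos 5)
Your proposal is correct, and its skeleton matches the paper's: both arguments reduce to $f=1$ by transporting $\graph(1)$ to $\graph(f)$ along the isometry $\mathrm{id}\otimes A$ of $(\F_{q^2}^9,\beta\otimes\beta)$, where $A\in\GU(3,q)$ induces $f$ (the paper phrases this as transitivity of the right-multiplication action $\graph(f)^g=\graph(fg)$), and both identify $\langle\graph(1)\rangle$ with the $6$-dimensional span of the symmetric tensors, using that the Hermitian curve is not contained in a conic. Where you diverge is in the treatment of the perp. The paper exhibits it conceptually as the $3$-dimensional space of alternating tensors $X\otimes Y-Y\otimes X$ (a one-line orthogonality check against $P\otimes P$, plus the dimension count $\binom{3}{2}=3$), after which the parity dichotomy is immediate: in characteristic $2$ the alternating tensors lie inside the span of the diagonal, i.e.\ inside $W$ itself, so the plane sits in the radical of $W$ and is totally isotropic, while for odd $q$ the symmetric/alternating decomposition gives $W\cap W^\perp=0$. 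Your route --- computing the rank of the Gram matrix of $\beta\otimes\beta$ on $W$ (rank $3$ versus $6$) and invoking $\mathrm{rad}\bigl(\beta\otimes\beta|_{W^\perp}\bigr)=W^\perp\cap W$ --- is logically equivalent and will certainly terminate (the Gram matrix on the alternating part is $2\bigl[\beta(e_i,e_k)\beta(e_j,e_l)-\beta(e_i,e_l)\beta(e_j,e_k)\bigr]$, visibly zero exactly in characteristic $2$), but the alternating-tensor description buys the same conclusion with no $9\times 9$ bookkeeping, and it also makes the disjointness from $\mathcal{S}_{2,2}$ transparent, since a nonzero alternating tensor has matrix rank $2$ and so is never simple. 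Your counting argument for that disjointness ($u\otimes w\perp X\otimes X^f$ forces every $X$ of the curve onto one of two lines, each meeting the curve in at most $q+1$ points, and $2(q+1)<q^3+1$) is valid and is in fact a welcome addition, as the paper's own proof never explicitly justifies the claim $\pi_f\cap\mathcal{S}_{2,2}=\emptyset$. One small caution for the write-up: your spanning set $\{e_i\otimes(e_j)^A+e_j\otimes(e_i)^A\}_{i\le j}$ degenerates on the diagonal in characteristic $2$, as you anticipate; the statement valid in all characteristics is that $v\otimes v^A$ lies in the span of the $e_i\otimes(e_i)^A$ together with the $e_i\otimes(e_j)^A+e_j\otimes(e_i)^A$ for $i<j$.
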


\begin{proof}
The diagonal $D:=\{ P\otimes P\colon P\in \PG(2,q^2)\}$ pertains to the projective points arising from
the symmetric tensors of the vector space $V:=\F_{q^2}^3$. The dimension of the symmetric square
of $\F_{q^2}^3$ is ${4\choose 2}=6$, and therefore, $D$ spans a projective $5$-space.
Moreover, as the Hermitian curve spans $\PG(2,q^2)$, we also know that $\graph(1)$ spans the same $5$-space,
where $1$ denotes the trivial element of $\PGU(3,q)$.
Now $\PGU(3,q)$ acts on $\{\graph(f)\colon f\in\PGU(3,q)\}$ in the following way:
\[
\graph(f)^g=\graph(fg).
\]
Note that since $\PGU(3,q)$ acts transitively on itself by right multiplication, we see that $\PGU(3,q)$
also acts transitively on $\{\graph(f)\colon f\in\PGU(3,q)\}$. So every $\graph(f)$ will span a space projectively equivalent
to the space spanned by $\graph(1)$.

Now let us consider the perp $\pi$ of $\graph(1)$.
Take the alternating tensors $A$ and construct projective points:
\[
A=\langle X\otimes Y - Y\otimes X\colon X,Y\in \PG(2,q^2) \rangle.
\]
Then for all $P,X,Y\in \PG(2,q^2)$ we have
\begin{align*}
(\beta\otimes \beta)( X\otimes Y - Y\otimes X, P\otimes P)&=(\beta\otimes \beta)( X\otimes Y, P\otimes P) -(\beta\otimes \beta)(Y\otimes X, P\otimes P) \\
&=\beta(X,P)\beta(Y,P)-\beta(Y,P)\beta(X,P)\\
&=0.
\end{align*}
Therefore, $A\subseteq \pi$. However, we know that $A$ has (algebraic) dimension ${3\choose 2}=3$ and hence $A=\pi$.
If $q$ is even, then $A$ is a subspace of $D$, and hence totally isotropic. Otherwise, $A\cap D$ is trivial and $A$
is non-degenerate.
\end{proof}

In analogy with the $\PGL(2,q)$ example from before, we will
call $\graph(f)$, where $f\in \PGL(3,q)$, a \emph{classical ovoid} of $\mathcal{U}_{2,2}$.
So in the injection above, we have that $\PGU(3,q)$ is distinguished amongst all permutations
of $\herm(2,q^2)$ by taking classical ovoids amongst all ovoids of $\mathcal{U}_{2,2}$.

\begin{problem}
Find an incidence geometry characterisation of classical ovoids of $\mathcal{U}_{2,2}$.
\end{problem}

We would then have a meaningful way to determine the following:
\[
\Cr_{q^3+1}(\PGU(3,q))=\max_{ \mathcal{O}\in\text{ovoids of }\mathcal{U}_{2,2}}\left(
\min_{\mathcal{C}\in\text{classical ovoids of }\mathcal{U}_{2,2}} |\mathcal{O}\cap \mathcal{C}|\right).
\]

\section*{Acknowledgments}

The third author's work on this paper was done when he was a research associate at the University of Western Australia supported by the Australian Research Council Discovery Project DP150101066. This work was inspired by the 2017 Centre for the Mathematics of Symmetry and Computation Research Retreat.

\end{document}